\newtheorem{theorem}{Theorem}[section]
\newtheorem{corollary}[theorem]{Corollary}
\newtheorem{proposition}[theorem]{Proposition}
\theoremstyle{definition}
\theoremstyle{definition}
\newtheorem{remark}[theorem]{Remark}
\theoremstyle{definition}
\newtheorem{example}[theorem]{Example}
\numberwithin{equation}{section} \numberwithin{table}{section}
\numberwithin{figure}{section}
\title{\bf Viewing nonoscillatory second order linear differential equations from the angle of  Riccati equations}
\author[1]{\textbf{Jaroslav Jaro\v s}\footnote{Corresponding author. Email: jaros@fmph.uniba.sk}}
\author[2]{\textbf{Kusano Taka\^si}}%Please write the email address of the corresponding author here
\author[3] {\textbf{Tomoyuki Tanigawa}}
\affil[1]{Department of Mathematical Analysis and Numerical
	Mathematics, Faculty of Mathematics, Physics and Informatics,
	Comenius University, Mlynsk\'a dolina, Bratislava, 842 48,
	Slovakia} \affil[2]{Department of Mathematics, Faculty of Science,
	Hiroshima University, Higashi Hiroshima, 739-8526, Japan}
\affil[3]{ Department of Mathematical Sciences, Osaka Prefecture
	University, Osaka 599-8531, Japan, ttanigawa@ms.osakafu-u.ac.jp}
\begin{document}
	
	\maketitle

\pagestyle{plain}

\begin{center}
\noindent
\begin{minipage}{0.85\textwidth}\parindent=15.5pt

{\small{ \noindent {\bf Abstract.} We build an existence theory for second order linear differential
equations of the form
\begin{equation*}
(p(t)x')'=q(t)x, \eqno{\textrm{(A)}}
\end{equation*}
$p(t)$ and $q(t)$ being positive continuous functions on
$[a,\infty)$, in which a crucial role is played by a pair of the
Riccati differential equations
\begin{equation*}
u'=q(t)-\frac{u^2}{p(t)} \qquad \qquad {\rm (R1)}\ ,\qquad \qquad
\qquad v'=\frac{1}{p(t)}-q(t)v^2 \quad \qquad \qquad {\rm (R2)}
\end{equation*}
associated with (A). An essential part of the theory is the
construction of a pair of linearly independent nonoscillatory
solutions $x_1(t)$ and $x_2(t)$ of (A) enjoying explicit
exponential-integral representations in terms of solutions
$u_1(t)$ and $u_2(t)$ of (R1) or in terms of solutions $v_1(t)$
and $v_2(t)$ of (R2). }
\smallskip

% Please enter at most 6 keywords here with lowercase letters separated by commas.
\noindent {\bf{Keywords:}} Linear differential equation,
non-oscillatory solutions, Riccati equation.
\smallskip

% Please enter at most 5 Mathematics Subject Classification codes here. Please use 2010 classification codes, which can be found on the following link: http://www.ams.org/msc//msc2010.html.
\noindent{\bf{2010 Mathematics Subject Classification:}} 34C10}

\end{minipage}
\end{center}

\section{Introduction}

\indent

Consider the second order linear differential equation
\begin{equation*}
(p(t)x')'=q(t)x, \eqno{\textrm{(A)}}
\end{equation*}
where $p: [a,\infty)\to(0,\infty)$ and $q: [a,\infty)\to{\bf R}$,
$a\geq 0$, are continuous functions.

We are concerned exclusively with nontrivial solutions of (A)
defined in some neighborhood of infinity, that is, on an interval
of the form $[T,\infty)$, $T\geq a$. Such a solution is called
{\it oscillatory} if it has an infinite sequence of zeros
clustering at infinity, and {\it nonoscillatory} otherwise.

It is known that (A) does not admit both oscillatory solutions and
nonoscillatory solutions simultaneously, that is to say, all
solutions of (A) are either oscillatory, in which case (A) is said
to be oscillatory, or else nonoscillatory, in which case (A) is
said to be nonoscillatory. For example, if $q(t)>0$ on
$[a,\infty)$, then equation (A) is always nonoscillatory, whereas
if $q(t)<0$ on $[a,\infty)$, then it is possible that (A) is
either oscillatory or nonoscillatory depending on the delicate
interrelations between $p(t)$ and $q(t)$. In order to detect
criteria for oscillation of equation (A) with $q(t)<0$ frequent
use has been made of the first order nonlinear differential
equation
\begin{equation*}
u'+\frac{u^2}{p(t)}-q(t)=0, \eqno{\textrm{(R1)}}
\end{equation*}
which is referred to as the Riccati differential equation
associated with (A). This (R1) is derived as a differential
equation satisfied by the function $u(t)=p(t)x'(t)/x(t)$ for any
nonoscillatory solution $x(t)$ of (A) defined in a neighborhood of
infinity. A solution of (R1) existing for all large $t$ is named a
{\it global solution} of (R1). Conversely, if $u(t)$ is a global
solution of (R1) on $[T,\infty)$, then $x(t)=\exp(\int_T^t
u(s)/p(s)ds)$ gives a solution of (A) on that interval. Thus there
exists a noteworthy relationship between (A) and (R1) as described
in the following
\begin{proposition}
\; {\it Equation} (A) is {\it nonoscillatory if and only if
equation} (R1) {\it has a global solution.}
\end{proposition}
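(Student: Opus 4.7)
The plan is to prove both directions by exploiting the classical Riccati substitution $u(t) = p(t) x'(t)/x(t)$, whose consistency on a neighborhood of infinity is guaranteed precisely by nonoscillation of $x(t)$.

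For the forward implication, I would start from the assumption that (A) is nonoscillatory and pick any nontrivial solution $x(t)$. By definition, $x(t)$ has no zeros on some interval $[T,\infty)$, so the quotient $u(t) = p(t) x'(t)/x(t)$ is well-defined and continuously differentiable there. A direct differentiation
\[
u'(t) = \frac{(p(t)x'(t))' \, x(t) - p(t) x'(t) \cdot x'(t)}{x(t)^2}
     = \frac{q(t) x(t)}{x(t)} - \frac{(p(t) x'(t))^2}{p(t) x(t)^2}
     = q(t) - \frac{u(t)^2}{p(t)},
\]
exhibits $u(t)$ as a global solution of (R1) on $[T,\infty)$.

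For the converse, suppose (R1) admits a global solution $u(t)$ on some $[T,\infty)$. Following the hint already given in the introduction, I would set $x(t) = \exp\!\bigl(\int_T^t u(s)/p(s)\,ds\bigr)$, which is strictly positive and hence nonoscillatory on $[T,\infty)$. A straightforward computation gives $p(t)x'(t) = u(t) x(t)$ and consequently
\[
(p(t)x'(t))' = u'(t) x(t) + u(t) x'(t)
            = \Bigl(u'(t) + \frac{u(t)^2}{p(t)}\Bigr) x(t)
            = q(t) x(t),
\]
so $x(t)$ is a nontrivial nonoscillatory solution of (A). I would then invoke the dichotomy recalled earlier in the introduction, namely that (A) cannot simultaneously possess oscillatory and nonoscillatory solutions, to conclude that (A) itself is nonoscillatory.

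Neither direction poses a real technical obstacle: the Riccati transformation is textbook, and the only point requiring any care is the backward implication, where one must justify passing from the existence of a \emph{single} nonoscillatory solution to the nonoscillation of the whole equation. That step relies on the dichotomy quoted in the introduction and requires no separate argument here. I would keep the proof to about half a page, dedicating most of the space to the two short calculations displayed above.
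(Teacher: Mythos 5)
Your proof is correct and follows essentially the same route as the paper: the Riccati substitution $u = px'/x$ for the forward direction, the exponential reproducing formula $x(t)=\exp\bigl(\int_T^t u(s)/p(s)\,ds\bigr)$ for the converse, and the oscillation/nonoscillation dichotomy to pass from a single nonoscillatory solution to nonoscillation of the whole equation. The paper's only cosmetic difference is that it lists two normalizations of the reproducing formula (one for a divergent, one for a convergent integral of $u/p$), but the single form you use already yields a positive, hence nonoscillatory, solution in both cases.
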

On the basis of this fact a wealth of oscillation criteria for
equation (A) have been obtained in the literature; see e.g.,
Swanson [9]. Those oscillation criteria for (A) represent one side
of the usefulness of Proposition 1.1. However, there seems to be
almost no past study shedding a light on another side of its
power, namely the possibility of a detailed qualitative study of
nonoscillatory solutions of (A) with the aid of global solutions
of (R1). A result (the only one we know of) in this regard can be
found in the book of Hille [5, Theorem 9.4.3] concerning the
special case of (A) with $p(t)\equiv 1$ and $q(t)<0$ on
$[a,\infty)$.

In this paper an attempt is made to develop a new nonoscillation
theory for the purely nonoscillatory equation (A) with $q(t)>0$ by
means of the Riccati equation (R1) and its equivalent
\begin{equation*}
v'=\frac{1}{p(t)}-q(t)v^2.   \eqno{\textrm{(R2)}}
\end{equation*}
Note that (R2) was recently found by Mirzov [7] as the equation
satisfied by $v(t)=x(t)/p(t)x'(t)$, where $x(t)$ is any
nonoscillatory solution of (A).

To specify what we are going to do with (A) and (R1) - (R2) we
need to recollect some basic facts about the solution set
$\mathcal{S}(\textrm{A})$ of equation (A).

(1)\; $\mathcal{S}(\textrm{A})\neq \emptyset$ and all of its
members are nonoscillatory.

(2)\; $\mathcal{S}(\textrm{A})$ is a two-dimensional linear space
over the reals {\bf R}. As a basis for $\mathcal{S}(\textrm{A})$
one can take a pair of positive solutions $\{x_1(t),x_2(t)\}$ of
(A) satisfying
\begin{equation}\label{1.1}
\lim_{t\to\infty}\frac{x_1(t)}{x_2(t)}=0,
\end{equation}
or
\begin{equation}\label{1.2}
\int_{}^{\infty}\frac{dt}{p(t)x_1(t)^2}=\infty \;\;\;
\textrm{and}\;\;\; \int_{}^{\infty}\frac{dt}{p(t)x_2(t)^2}<\infty.
\end{equation}
See Hartman [4, Theorem 6.4; Corollary 6.4]. The functions
$x_1(t)$ and $x_2(t)$ are called, respectively, a {\it principal
solution} and a {\it nonprincipal solution} of (A). A principal
solution is uniquely determined up to a constant factor. See [4,
Theorem 6.4]. It may be assumed that $x_1(t)$ is decreasing and
$x_2(t)$ is increasing; see [4, Corollary 6.4].

(3)\; If $x(t)$ is any solution of (A) on $[T,\infty)$, then
\begin{equation}\label{1.3}
x_2(t)=x(t)\int_T^t \frac{ds}{p(s)x(s)^{2}}
\end{equation}
is a nonprincipal solution on $[T,\infty)$. If, in addition,
$x(t)$ is a nonprincipal solution of (A), then
\begin{equation}\label{1.4}
x_1(t)=x(t)\int_t^{\infty} \frac{ds}{p(s)x(s)^{2}}
\end{equation}
is a principal solution of (A) on $[T,\infty)$. See [4, Corollary
6.3].

(4)\; Information on the possible asymptotic behavior of $x_1(t)$
and $x_2(t)$ forming a basis for $\mathcal{S}(\textrm{A})$
follows. We need to discuss the three cases $\{I_p=\infty\}$,
$\{I_q=\infty\}$ and $\{I_p<\infty\wedge I_q<\infty\}$ separately,
where
\begin{equation}\label{1.5}
I_p=\int_a^{\infty}\frac{dt}{p(t)}\quad\textrm{and}\quad
I_q=\int_a^{\infty}q(t)dt.
\end{equation}

(i)\; Suppose that $I_p=\infty$. Then $x_1(t)$ and $x_2(t)$
satisfy either
\begin{equation}\label{1.6}
\lim_{t\to\infty}x_1(t)=\textrm{const}>0\;\;\;\textrm{and}\;\;\;
\lim_{t\to\infty}\frac{x_2(t)}{P(t)}=\textrm{const}>0,
\end{equation}
or
\begin{equation}\label{1.7}
\lim_{t\to\infty}x_1(t)=0\;\;\;\textrm{and}\;\;\;
\lim_{t\to\infty}\frac{x_2(t)}{P(t)}=\infty,
\end{equation}
where $P(t)$ is defined by
\begin{equation}\label{1.8}
P(t)=\int_a^t \frac{ds}{p(s)}.
\end{equation}

(ii)\; Suppose that $I_q=\infty$. Then $x_1(t)$ and $x_2(t)$
satisfy either
\begin{equation}\label{1.9}
\lim_{t\to\infty}\frac{x_1(t)}{\pi(t)}=\textrm{const}>0\;\;\;\textrm{and}\;\;\;
\lim_{t\to\infty}x_2(t)=\textrm{const}>0,
\end{equation}
or
\begin{equation}\label{1.10}
\lim_{t\to\infty}\frac{x_1(t)}{\pi(t)}=0\;\;\;\textrm{and}\;\;\;
\lim_{t\to\infty}x_2(t)=\infty,
\end{equation}
where $\pi(t)$ is given by
\begin{equation}\label{1.11}
\pi(t)=\int_t^{\infty}\frac{ds}{p(s)}.
\end{equation}

(iii)\; Suppose that $I_p<\infty\wedge I_q<\infty$. Then $x_1(t)$
and $x_2(t)$ satisfy
\begin{equation}\label{1.12}
\lim_{t\to\infty}\frac{x_1(t)}{\pi(t)}=\textrm{const}>0\;\;\;\textrm{and}\;\;\;
\lim_{t\to\infty}x_2(t)=\textrm{const}>0.
\end{equation}

Solutions $x_1(t)$ and $x_2(t)$ satisfying \eqref{1.6},
\eqref{1.9} and \eqref{1.12} are called {\it moderate solutions}
of (A), while those satisfying \eqref{1.7} and \eqref{1.10} are
called {\it extreme solutions} of (A). A pair of solutions
$\{x_1(t),x_2(t)\}$ is termed a {\it moderate basis} or an {\it
extreme basis} for $\mathcal{S}${\textrm{(A)} according to whether
both $x_1(t)$ and $x_2(t)$ are moderate solutions or extreme
solutions of (A). It will turn out later that no basis for
$\mathcal{S}${\textrm{(A)} is allowed to be a combination of a
moderate solution and an extreme solution.

Is it possible to establish criteria for judging the existence of
a moderate basis or an extreme basis for equation (A)?  Answering
this question regarding moderate bases is relatively easy, and as
is described in Section 3 one can find necessary and sufficient
conditions for (A) to possess a moderate basis whose components
$x_1(t)$ and $x_2(t)$ satisfy \eqref{1.6}, \eqref{1.9} or
\eqref{1.12}. It is demonstrated that all moderate bases can be
constructed by way of the Riccati equations (R1) and (R2), both of
which are indispensable in the construction process. Although the
Riccati differential equations have a long history (see e.g.,
Sansone [8]), their potential for reproduction of solutions of the
associated second order linear differential equations has not been
recognized until now. We note that some results of Section 3 are
known (see e.g.,[1], [2], [6]), but the derivation is essentially
different.

What about extreme bases for (A)?  This question seems to be
extremely difficult to answer for now. As far as we are aware,
historically no systematic study has been made of the existence
and asymptotic behavior of extreme solutions of (A). However, it
is possible to detect several nontrivial cases in which extreme
solutions are produced by effective application of (R1) and (R2).
The details of how to develop a tiny theory of extreme solutions
of (A) are provided in Section 4. Examples illustrating all of our
theorems are given in Section 5.

What is requisite in building our theory for (A) are an a priori
knowledge of the overall structure of solutions of (A), or
equivalently, the classification of $\mathcal{S}\textrm{(A)}$ into
appropriate disjoint subclasses according to the asymptotic
behavior at infinity of its members, and understanding of the
Riccati equations (R1) and (R2) as an instrument for reproducing
solutions of (A) from their solutions. All these things together
are explained in the preparatory Section 2.

\section{Preparatory observations}

\subsection{Classification of nonoscillatory solutions of (A)}

\indent

Let $x(t)$ be a (nonoscillatory) solution of equation (A). There
exists $T\geq a$ such that $x(t)Dx(t)\neq 0$ (i.e.,$x(t)x'(t)\neq
0$) for $t\geq T$. Since (A) implies that both $x(t)$ and $Dx(t)$
are monotone on $[T,\infty)$, they have the limits
$x(\infty)=\lim_{t\to\infty}x(t)$ and
$Dx(\infty)=\lim_{t\to\infty}Dx(t)$ in the extended real number
system. The pair $(x(\infty),Dx(\infty))$ is a crucial indicator
of the asymptotic behavior of a solution $x(t)$ of (A) as
$t\to\infty$. It is referred to as the {\it terminal state} of
$x(t)$. Our purpose here is to show that one can enumerate in
advance all possible patterns of terminal states of solutions of
(A) by considering the three cases $I_p=\infty$, $I_q=\infty$ and
$I_p<\infty\wedge I_q<\infty$ separately (cf. \eqref{1.5}).

(I)\; We start with the case $I_p=\infty$. Let $x(t)$ be a
solution of (A) such that $x(t)Dx(t)>0$ on $[T,\infty)$. By (A)
this means that $|x(t)|$ and $|Dx(t)|$ are increasing for $t\geq
T$. The limit $|Dx(\infty)|$ may or may not be finite. In either
case we have $|x(\infty)|=\infty$. In fact, if $x(t)$ is positive,
then from the inequality
$$
Dx(t)=p(t)x'(t)\geq Dx(T)\quad \textrm{or}\quad x'(t)\geq
\frac{Dx(T)}{p(t)}, \quad t\geq T,
$$
we see that $x(\infty)=\infty$. If $x(t)$ is negative, repeating
the same argument with $x(t)$ replaced by $-x(t)$, we are led to
the conclusion that $|x(\infty)|=\infty$.

Let $x(t)$ be a solution of (A) satisfying $x(t)Dx(t)<0$ on
$[T,\infty)$. Then, by (A) $|x(t)|$ is decreasing and $|Dx(t)|$ is
increasing for $t\geq T$. The limit $|x(\infty)|$ may or may not
be zero. In either case we have $|Dx(\infty)|=0$. Suppose for
contradiction that $|Dx(\infty)|=k>0$. If $x(t)$ is positive, then
since $Dx(t)$ is negative, we obtain
$$
Dx(t)=p(t)x'(t)\leq -k\quad \textrm{or}\quad x'(t)\leq
\frac{-k}{p(t)},\quad t\geq T,
$$
which implies $x(\infty)=-\infty$, an impossibility. If $x(t)$ is
negative, it suffices to repeat the same argument with $x(t)$
replaced by $-x(t)$.

From the above discussions it is concluded that for a solution
$x(t)$ of (A) satisfying $x(t)Dx(t)>0$ for all large $t$ the
following two types of its terminal state are possible
$$
|x(\infty)|=\infty\quad \textrm{and}\quad 0<|Dx(\infty)|<\infty,
\leqno{\textrm{I(i)}}
$$
$$
|x(\infty)|=\infty\quad \textrm{and}\quad |Dx(\infty)|=\infty,
\leqno{\textrm{I(ii)}}
$$
and that for a solution $x(t)$ of (A) satisfying $x(t)Dx(t)<0$ for
all large $t$ the following two types of terminal states are
possible
$$
0<|x(\infty)|<\infty\quad \textrm{and}\quad |Dx(\infty)|=0,
\leqno{\textrm{I(iii)}}
$$
$$
|x(\infty)|=0\quad \textrm{and}\quad |Dx(\infty)|=0.
\leqno{\textrm{I(iv)}}
$$

(II)\; We turn to the case $I_q=\infty$. Let $x(t)$ be any
solution of (A). If $x(t)Dx(t)>0$ on $[T,\infty)$, then $|x(t)|$
and $|Dx(t)|$ are increasing for $t\geq T$. The limit
$|x(\infty)|$ may be finite or infinite, but in either case we
have $|Dx(\infty)|=\infty$. In fact, if $x(t)$ is positive, then
integrating (A) from $T$ to $t$ gives
$$
Dx(t)=Dx(T)+\int_T^t q(s)x(s)ds \geq x(T)\int_T^t
q(s)ds\to\infty,\;\;\; t\to\infty,
$$
which implies that $Dx(\infty)=\infty$. The same argument as above
applies to $-x(t)$ leads us to $Dx(\infty)=-\infty$.

If $x(t)Dx(t)<0$ on $[T,\infty)$, then $|x(t)|$ is decreasing and
$|Dx(t)|$ is increasing. The limit $|Dx(\infty)|$ may be finite or
infinite, but in either case we have $|x(\infty)|=0$. Suppose to
the contrary that $|x(\infty)|=c>0$. If $x(t)$ is positive, then
integrating (A) on $[T,t]$, we obtain
$$
Dx(t)=Dx(T)+\int_T^t q(s)x(s)ds\geq Dx(T)+c\int_T^t
q(s)ds\to\infty,\;\;\; t\to\infty,
$$
which implies $Dx(\infty)=\infty$. But this contradicts the
negativity of $Dx(t)$. Similarly, if $x(t)$ is negative, we are
led to the contradiction that $Dx(\infty)\to-\infty$ as
$t\to\infty$.

Summarizing what is said above, we conclude that for a solution
$x(t)$ of (A) such that $x(t)Dx(t)>0$ for all large $t$ the
following two types of its terminal state are possible
$$
0<|x(\infty)|<\infty\quad \textrm{and}\quad |Dx(\infty)|=\infty,
\leqno{\textrm{II(i)}}
$$
$$
|x(\infty)|=\infty\quad \textrm{and}\quad |Dx(\infty)|=\infty,
\leqno{\textrm{II(ii)}}
$$
and that a solution $x(t)$ of (A) such that $x(t)Dx(t)<0$ for all
large $t$ the following two types of its terminal state are
possible
$$
|x(\infty)|=0\quad \textrm{and}\quad 0<|Dx(\infty)|<\infty,
\leqno{\textrm{II(iii)}}
$$
$$
|x(\infty)|=0\quad \textrm{and}\quad |Dx(\infty)|=0.
\leqno{\textrm{II(iv)}}
$$

(III)\; Finally we are concerned with the case $I_p<\infty\wedge
I_q<\infty$. It can be shown that if $x(t)$ is a solution of (A)
such that $x(t)Dx(t)>0$ for all large $t$, then its terminal state
satisfies
$$
0<|x(\infty)|<\infty \quad \textrm{and}\quad
0<|Dx(\infty)|<\infty, \leqno{\textrm{III(i)}}
$$
and that if $x(t)$ is a solution of (A) such that $x(t)Dx(t)<0$
for all large $t$, then its terminal state satisfies either
$$
|x(\infty)|=0\quad \textrm{and}\quad 0<|Dx(\infty)|<\infty,
\leqno{\textrm{III(ii)}}
$$
or
$$
0<|x(\infty)|<\infty \quad \textrm{and}\quad |Dx(\infty)|=0.
\leqno{\textrm{III(iii)}}
$$

It suffices to verify these facts for eventually positive
solutions of (A). Let $x(t)$ be a solution of (A) such that
$x(t)>0$ and $Dx(t)>0$ on $[T,\infty)$, where $T>a$ is chosen so
that
\begin{equation}\label{2.1}
\int_T^{\infty}\frac{1}{p(s)}\int_T^s q(r)drds\leq \frac{1}{2}.
\end{equation}
Integrating (A) twice on $[T,t]$, we have
\begin{equation}\label{2.2}
Dx(t)=Dx(T)+\int_T^t q(s)x(s)ds,
\end{equation}
\begin{equation}\label{2.3}
x(t)=x(T)+\int_T^t \frac{Dx(T)}{p(s)}ds +\int_T^t
\frac{1}{p(s)}\int_T^s q(r)x(r)drds,
\end{equation}
for $t\geq T$. From \eqref{2.1} and \eqref{2.3} we see that
$$
\frac{1}{2}x(t)\leq x(T)+Dx(T)\int_T^{\infty}\frac{ds}{p(s)},\quad
t\geq T,
$$
which implies that $0<x(\infty)<\infty$. This combined with
\eqref{2.2} gives
$$
Dx(t)\leq Dx(T)+x(t)\int_T^t q(s)ds\leq
Dx(T)+x(\infty)\int_T^{\infty}q(s)ds,\quad t\geq T,
$$
which implies that $0<Dx(\infty)<\infty$. Thus the terminal state
of $x(t)$ is of the type III(i).

Let $x(t)$ be a solution of (A) such that $x(t)>0$ and $Dx(t)<0$
for all large $t$. It is clear that $0\leq x(\infty)<\infty$ and
$-\infty<Dx(\infty)\leq 0$. But the possibility
$x(\infty)=Dx(\infty)=0$ should be excluded because if this would
occur, integrating (A) twice from $t$ to $\infty$ would give
$$
x(t)=\int_t^{\infty}\frac{1}{p(s)}\int_s^{\infty}q(r)x(r)drds \leq
x(t)\int_t^{\infty}\frac{1}{p(s)}\int_s^{\infty}q(r)drds,
$$
or
$$
1\leq \int_t^{\infty}\frac{1}{p(s)}\int_s^{\infty}q(r)drds\to
0,\quad t\to\infty,
$$
a contradiction. Therefore the terminal state of $x(t)$ must be
either of the type III(ii) or of the type III(iii).

Solutions of (A) having the terminal states of the types I(i),
I(iii), II(i), II(iii), III(i) - III(iii) are named {\it moderate
solutions} of (A), whereas those having the terminal states of the
types I(ii), I(iv), II(ii), II(iv) are named {\it extreme
solutions} of (A).

\begin{remark}\; (i)\; Suppose that $I_p=\infty\wedge
I_q=\infty$. Since both (I) and (II) apply to (A), we see that the
only possible types of terminal states of its solutions are I(ii)
(= II(ii)) and I(iv) (=II(iv)). In other words all solutions of
(A) must be extreme solutions.

(ii)\; Suppose that $I_p<\infty\wedge I_q<\infty$. In this case
all solutions of (A) are moderate solutions which are bounded
together with their quasi-derivatives for all large $t$.
\end{remark}

\subsection{Riccati equations associated with (A)}

\indent

Let $x(t)$ be any nonoscillatory solution of equation (A). Then
$x(t)x'(t)\neq 0$ on $[T,\infty)$ for some $T\geq a$. Define
\begin{equation}\label{2.4}
u(t)=\frac{p(t)x'(t)}{x(t)}\;\;\;\textrm{and}\;\;\;v(t)=\frac{x(t)}{p(t)x'(t)}.
\end{equation}
It is elementary to show that $u(t)$ and $v(t)$ satisfy,
respectively, the first order nonlinear differential equations
(R1) and (R2) on $[T,\infty)$ which are referred to as the first
and second Riccati equations associated with (A).

Conversely, suppose that (R1) has a global solution $u(t)$ on
$[T,\infty)$. Bearing \eqref{2.1} in mind define $x(t)$ by
\begin{equation}\label{2.5}
x(t)=\exp\Bigl(\int_T^t
\frac{u(s)}{p(s)}ds\Bigr)\;\;\;\textrm{or}\;\;\;
x(t)=\exp\Bigl(-\int_t^{\infty}\frac{u(s)}{p(s)}ds\Bigr),\quad
t\geq T,
\end{equation}
according to whether $\int_T^{\infty} u(t)/p(t)dt$ is divergent or
convergent. It is obvious that \eqref{2.5} gives a nonoscillatory
solution of (A) defined for $t\geq T$. Thus the truth of
Proposition 1.1 is verified. We may well call \eqref{2.5} a {\it
reproducing formula} for solutions of (A) by way of Riccati
equation (R1), or from solutions of (R1).

We note that an alternative reproducing formula for (A) by way of
(R1)
\begin{equation}\label{2.6}
x(t)=\frac{1}{u(t)}\exp\Bigl(\int_T^t
\frac{q(s)}{u(s)}ds\Bigr)\;\;\;\textrm{or}\;\;\;
x(t)=\frac{1}{u(t)}\exp\Bigl(-\int_t^{\infty}
\frac{q(s)}{u(s)}ds\Bigr), \quad t\geq T,
\end{equation}
can be derived without difficulty. It suffices to obtain
\begin{equation}\label{2.7}
\frac{(p(t)x'(t))'}{p(t)x'(t)}=\frac{q(t)}{u(t)},\quad t\geq T,
\end{equation}
by combining \eqref{2.1} rewritten as $x(t)=(p(t)x'(t))u(t)$ with
(A) rewritten as $x(t)=(p(t)x'(t))'/q(t)$, and then to integrate
\eqref{2.7} over $[T,t]$ or $(t,\infty)$ according as $q(t)/u(t)$
is non-integrable or integrable on $[T,\infty)$.

Similarly, reproducing solutions of (A) by way of the second
Riccati equation (R2) can be carried out via the following
formulas corresponding to \eqref{2.5} and \eqref{2.6}:
\begin{equation}\label{2.8}
x(t)=\exp\Bigl(\int_T^t
\frac{ds}{p(s)v(s)}\Bigr)\;\;\;\textrm{or}\;\;\;
x(t)=\exp\Bigl(-\int_t^{\infty} \frac{ds}{p(s)v(s)}\Bigr),\quad
t\geq T,
\end{equation}
and
\begin{equation}\label{2.9}
x(t)=v(t)\exp\Bigl(\int_T^t
q(s)v(s)ds\Bigr)\;\;\;\textrm{or}\;\;\;
x(t)=v(t)\exp\Bigl(-\int_t^{\infty} q(s)v(s)ds\Bigr),\quad t\geq
T.
\end{equation}

One of the central problems studied in this paper is the
reproduction of a pair of linearly independent solutions of
equation (A) from suitable global solutions of the Riccati
equations (R1) and/or (R2). Let $\{x_1(t),x_2(t)\}$ be a pair of
linearly independent solutions of (A) such that $x_i(t)x_i'(t)\neq
0$ for $t\geq T$, $i=1,2$. From the well-known identity
$$
p(t)[x_2(t)x_1'(t)-x_1(t)x_2'(t)]= C\neq 0 \;\;\;\textrm{(a
constant)},\quad t\geq T,
$$
it follows that
$$
\frac{p(t)x_1'(t)}{x_1(t)}-\frac{p(t)x_2'(t)}{x_2(t)}=\frac{C}{x_1(t)x_2(t)},
$$
and
$$\frac{x_1(t)}{p(t)x_1'(t)}-\frac{x_2(t)}{p(t)x_2'(t)}=
\frac{C}{p(t)^2x_1'(t)x_2'(t)},
$$
for $t\geq T$. This implies that if $x_i(t)$, $i=1,2$, are
reproduced from a pair of solutions $u_i(t)$, $i=1,2$, of (R1),
then $u_1(t)\neq u_2(t)$ for all $t\geq T$, and that the same is
true of a pair of solutions $v_i(t)$, $i=1,2$, of (R2) which
reproduce $x_i(t)$, $i=1,2$.

\begin{remark}\; Since a solution $x(t)$ of (A) given by
\eqref{2.5} or \eqref{2.6} satisfies $x(t)Dx(t)=u(t)x(t)^2$, it is
a principal solution or a nonprincipal solution according as
$u(t)$ is eventially positive or negative. Similarly, a solution
$x(t)$ of (A) given by \eqref{2.8} or \eqref{2.9} satisfies
$x(t)Dx(t)=x(t)^2/v(t)$, it is a principal solution or a
nonprincipal solution according as $v(t)$ is eventually positive
or negative.
\end{remark}

\section{Moderate solutions of (A)}

\indent

The main purpose of this section is to demonstrate that all
moderate solutions of equation (A) can be reproduced from global
solutions of the associated Riccati equations (R1) and (R2).

We first consider (A) with $p(t)$ satisfying $I_p=\infty$. Assume
that (A) has a moderate solution $x(t)$ such that $x(t)Dx(t)\neq
0$ for $t\geq T$. We may suppose that $x(t)$ is positive on
$[T,\infty)$. Let $x_1(t)$ be a solution of the type I(i). Then it
satisfies
\begin{equation}\label{equation3.1}
\lim_{t\to\infty}\frac{x_1(t)}{P(t)}=\lim_{t\to\infty}Dx_1(t)=d,
\end{equation}
for some $d>0$. Integrating the second equation of (A) from $t$ to
$\infty$, we have
\begin{equation*}
Dx_1(t)=d-\int_t^{\infty}q(s)x_1(s)ds,\quad t\geq T,
\end{equation*}
which means the integrability of $q(t)x_1(t)$ on $[T,\infty)$.
This fact combined with \eqref{equation3.1} gives
\begin{equation}\label{equation3.2}
\int_a^{\infty}P(t)q(t)dt<\infty.
\end{equation}

Let $x_2(t)$ be a solution of class I(iii) of (A). It clearly
satisfies
\begin{equation}\label{3.3}
\lim_{t\to\infty}x_2(t)=c \quad \textrm{and}\quad
\lim_{t\to\infty}\frac{Dx_2(t)}{\rho(t)}=-c,
\end{equation}
for some $c>0$, where $\rho(t)$ is defined by
\begin{equation}\label{3.4}
\rho(t)=\int_t^{\infty}q(s)ds,\quad t\geq a.
\end{equation}
Integrating (A) twice on $[t,\infty)$, we obtain
$$
x_2(t)=c+\int_t^{\infty}(P(s)-P(t))q(s)x_2(s)ds, \quad t\geq T,
$$
from which it follows that \eqref{equation3.2} must also be
satisfied. Thus we see that \eqref{equation3.2} is a necessary
condition for the existence of moderate solutions for (A)

A useful equivalent of \eqref{equation3.2} is given in the
following remark.

\begin{remark}\label{remark3.1}\; {\it If $I_p=\infty$, then}
\begin{equation}\label{3.5}
\int_a^{\infty}q(t)P(t)dt<\infty\quad \Longleftrightarrow \quad
\int_a^{\infty}\frac{\rho(t)}{p(t)}dt<\infty.
\end{equation}
In fact, by combining
$$
\int_a^t q(s)P(s)ds+\rho(t)P(t)=\int_a^t
\frac{\rho(s)}{p(s)}ds,\quad t\geq a,
$$
with $\rho(t)P(t)\leq \int_t^{\infty}q(s)P(s)ds$, we find that
$\int_a^{\infty} q(s)P(s)ds=\int_a^{\infty}\rho(s)/p(s)ds$.
\end{remark}

It turns out that \eqref{equation3.2} is also a sufficient
condition for the existence of moderate solutions of the types
I(i) and I(iii) of (A). Suppose that \eqref{equation3.2} and hence
\eqref{3.5} holds.

Choose $T>a$ so that
\begin{equation}\label{3.6}
\int_T^{\infty}\frac{\rho(s)}{p(s)}ds\leq \frac{1}{4},
\end{equation}
and denote by $\mathcal{U}$ the set of functions
\begin{equation}\label{3.7}
\mathcal{U}=\big\{u\in C_0[T,\infty): -\rho(t)\leq u(t)\leq
-\frac{1}{2}\rho(t),\;\; t\geq T\big\},
\end{equation}
where $C_0[T,\infty)$ is the totality of continuous functions on
$[T,\infty)$ tending to zero as $t\to\infty$. It is a Banach space
with the norm $\|u\|_0=\sup\{|u(t)|:t\geq T\}$. Define the
integral operator $F$ by
\begin{equation}\label{3.8}
Fu(t)=-\rho(t)+ \int_t^{\infty}\frac{u(s)^2}{p(s)}ds,\quad t\geq
T,
\end{equation}
and let it act on $\mathcal{U}$ which is a closed subset of
$C_0[T,\infty)$.

If $u\in \mathcal{U}$, then, since
$$
\int_t^{\infty}\frac{u(s)^2}{p(s)}ds\leq
\int_t^{\infty}\frac{\rho(s)^2}{p(s)}ds \leq
\rho(t)\int_t^{\infty}\frac{\rho(s)}{p(s)}ds\leq
\frac{1}{4}\rho(t), \quad t\geq T,
$$
we obtain $-\rho(t)\leq Fu(t)\leq -\rho(t)/2$ for $t\geq T$. This
shows that $F$ is a self-map of $\mathcal{U}$. If $u_1,\;u_2\in
\mathcal{U}$, then
$$
|Fu_1(t)-Fu_2(t)|\leq
\int_t^{\infty}\frac{1}{p(s)}|u_1(s)^2-u_2(s)^2|ds
$$
$$
\leq \int_t^{\infty}\frac{2\rho(s)}{p(s)}|u_1(s)-u_2(s)|ds \leq
2\int_t^{\infty}\frac{\rho(s)}{p(s)}ds{\cdot}\|u_1-u_2\|_0\leq
\frac{1}{2}\|u_1-u_2\|_0,
$$
from which it follows that
$$
\|Fu_1-Fu_2\|_0\leq \frac{1}{2}\|u_1-u_2\|_0.
$$
This means that $F$ is a contraction on $\mathcal{U}$. Therefore,
there exists a unique fixed point $u\in \mathcal{U}$ which
satisfies
\begin{equation}\label{3.9}
u(t)=-\rho(t)+ \int_t^{\infty}\frac{u(s)^2}{p(s)}ds,\quad t\geq T,
\end{equation}
and hence gives a global (negative) solution of (R1) on
$[T,\infty)$. Note that $-u(t)\sim \rho(t)$ as $t\to\infty$. Here
the symbol $\sim$ is used to mean the asymptotic equivalence of
two positive functions $f(t)$ and $g(t)$;
$$
f(t)\sim g(t),\quad t\to\infty \quad \Longleftrightarrow \quad
\lim_{t\to\infty}\frac{f(t)}{g(t)}=1.
$$
Since $u(t)/p(t)$ is integrable on $[T,\infty)$, we can use the
reproducing formula \eqref{2.5} to define a positive solution of
(A) by
\begin{equation}\label{3.10}
x_1(t)=\exp\Bigl(-\int_t^{\infty}\frac{u(s)}{p(s)}ds\Bigr),\quad
t\geq T.
\end{equation}
It is clear that $x_1(\infty)=1$. The quasi-derivative of $x_1(t)$
is given by
\begin{equation}\label{3.11}
Dx_1(t)=u(t)\exp\Bigl(-\int_t^{\infty}\frac{u(s)}{p(s)}ds\Bigr),\quad
t\geq T,
\end{equation}
and satisfies $Dx_1(t)\sim -\rho(t)$ as $t\to\infty$. Thus
$x_1(t)$ is a positive decreasing solution of the type I(iii),and
hence a principal solution of equation (A).

Moderate solutions of the type I(i) of (A) will be reproduced
under condition \eqref{equation3.2} by way of the second Riccati
equation (R2). Choose $T>a$ so that
\begin{equation}\label{3.12}
\int_T^{\infty}q(s)P(s)ds\leq \frac{1}{4},
\end{equation}
and define the set $\mathcal{V}$ by
\begin{equation}\label{3.13}
\mathcal{V}=\big\{v\in C_P[T,\infty): \frac{1}{2}P(t)\leq v(t)\leq
P(t),\;\; t\geq T\big\},
\end{equation}
where $C_P[T,\infty)$ denotes the Banach space of continuous
functions $v(t)$ on $[T,\infty)$ such that
$\|v\|_P=\sup\{|v(t)|/P(t): t\geq T\}<\infty$. Consider the
integral operator $G$ given by
\begin{equation}\label{3.14}
Gv(t)=P(t)-\int_T^t q(s)v(s)^2ds,\quad t\geq T,
\end{equation}
and let it act on $\mathcal{V}$. If $v\in \mathcal{V}$, then since
$$
\int_T^t q(s)v(s)^2ds\leq \int_T^t q(s)P(s)^2ds\leq P(t)\int_T^t
q(s)P(s)ds \leq \frac{1}{4}P(t),
$$
for $t\geq T$, we obtain $P(t)/2\leq Gv(t)\leq P(t)$ on
$[T,\infty)$. This shows that $G$ maps $\mathcal{V}$ into itself.
If $v_1,\;v_2\in\mathcal{V}$, then from the inequalities
$$
\frac{|Gv_1(t)-Gv_2(t)|}{P(t)}\leq \frac{1}{P(t)}\int_T^t
q(s)|v_1(s)^2-v_2(s)^2|ds
$$
$$
\leq 2\int_T^t q(s)P(s){\cdot}\frac{|v_1(s)-v_2(s)|}{P(s)}ds \leq
\frac{1}{2}\|v_1-v_2\|_P,
$$
we find that
$$
\|Gv_1-Gv_2\|_P\leq \frac{1}{2}\|v_1-v_2\|_P,
$$
that is, $G$ is a contraction on $\mathcal{V}$. Consequently, $G$
has a unique fixed point $v\in\mathcal{V}$, which satisfies
\begin{equation}\label{3.15}
v(t)=P(t)-\int_T^t q(s)v(s)^2ds,\quad t\geq T,
\end{equation}
and hence the Riccati equation (R2) on $[T,\infty)$. We claim that
\eqref{3.15} implies $v(t)\sim P(t)$ as $t\to\infty$. This follows
from \eqref{3.15} if it is confirmed that
\begin{equation}\label{3.16}
\lim_{t\to\infty}\frac{1}{P(t)}\int_T^t q(s)P(s)^2ds=0.
\end{equation}
Let $\varepsilon>0$ be given arbitrarily. Because of
\eqref{equation3.2} there exists $t_{\varepsilon}>T$ such that
\begin{equation}\label{3.17}
\int_{t_{\varepsilon}}^{\infty}q(s)P(s)ds<\frac{\varepsilon}{2}.
\end{equation}
Let this $t_{\varepsilon}$ be fixed and choose
$T_{\varepsilon}>t_{\varepsilon}$ so that
\begin{equation}\label{3.18}
\frac{1}{P(t)}\int_T^{t_{\varepsilon}}q(s)P(s)^2ds<\frac{\varepsilon}{2},\quad
t\geq T_{\varepsilon}.
\end{equation}
Then, using \eqref{3.17} and \eqref{3.18} we see that if
$t>T_{\varepsilon}$, then
$$
\frac{1}{P(t)}\int_T^t
q(s)P(s)^2ds=\frac{1}{P(t)}\Bigl(\int_T^{t_{\varepsilon}}
+\int_{t_{\varepsilon}}^t \Bigr)q(s)P(s)^2 ds
$$
$$
\leq \frac{\varepsilon}{2}+\int_{t_{\varepsilon}}^t q(s)P(s)ds
<\frac{\varepsilon}{2}+\frac{\varepsilon}{2}=\varepsilon.
$$
This clearly implies \eqref{3.16}, which guarantees the asymptotic
equivalence of the solution $v(t)$ of (R2) obtained above and the
function $P(t)$ as $t\to\infty$. Let us now construct a
nonoscillatory solution $x_2(t)$ of (A)
\begin{equation}\label{3.19}
x_2(t)=v(t)\exp\Bigl(-\int_t^{\infty}q(s)v(s)ds\Bigr),\quad t\geq
T,
\end{equation}
according to the reproducing formula \eqref{2.9}. Its
quasi-derivative is given by
\begin{equation}\label{3.20}
Dx_2(t)=\exp\Bigl(-\int_t^{\infty}q(s)v(s)ds\Bigr),\quad t\geq T.
\end{equation}
Since $x_2(t)\sim v(t)\sim P(t)$ and $Dx_2(t)\sim 1$ as
$t\to\infty$, $x_2(t)$ is a positive moderate solution of the type
I(i) and hence a nonprincipal solution of equation (A).

Note that by putting $u(t)=1/v(t)$, \eqref{3.19} and \eqref{3.20}
are rewritten as
$$
x_2(t)=\frac{1}{u(t)}\exp\Bigl(-\int_t^{\infty}\frac{q(s)}{u(s)}ds\Bigr)\quad
\textrm{and}\quad
Dx_2(t)=\exp\Bigl(-\int_t^{\infty}\frac{q(s)}{u(s)}ds\Bigr).
$$
Since $u(t)$ is a solution of (R1), this allows us to assert that
a solution $x_2(t)$ of the type (I)(i) of (A) can also be
reproduced from a global solution of the first Riccati equation
(R1).

Summarizing what are discussed above, we obtain the following
theorem which characterizes the structure of the totality of
moderate solutions of (A) for the case $I_p=\infty$.

\begin{theorem}\label{theorem3.1}\; {\it Assume that $I_p=\infty$. All
solutions of equation} {\rm (A)} {\it are moderate if and only if
the condition}
$$
\int_a^{\infty}q(t)P(t)dt<\infty\quad
\Bigl(or\;equivalently,\;\;\int_a^{\infty}\frac{\rho(t)}{p(t)}dt<\infty
\Bigr)
$$
{\it is satisfied. In this case} {\rm (A)} {\it has a moderate
basis consisting of solutions $x_1(t)$ and $x_2(t)$ which are
reproduced from global solutions of} {\rm (R1)} ({\it or} {\rm
(R2)}) {\it and satisfy, as $t\to\infty$,}
\begin{equation}\label{3.21}
x_1(t)\sim 1,\quad Dx_1(t)\sim -\rho(t),
\end{equation}
\begin{equation}\label{3.22}
x_2(t)\sim P(t),\quad Dx_2(t) \sim 1.
\end{equation}
\end{theorem}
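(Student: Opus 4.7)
My plan is to prove the iff bidirectionally, deriving $\int_a^\infty q(t)P(t)\,dt<\infty$ from any moderate principal/nonprincipal pair and, in the converse direction, producing the asserted basis via two independent Banach fixed-point constructions, one on (R1) and one on (R2).

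For necessity, suppose every solution of (A) is moderate and take the canonical principal/nonprincipal basis $\{x_1,x_2\}$ from Section~1. Under $I_p=\infty$, moderateness forces $x_1$ into class I(iii) (so $x_1(t)\to c>0$, $Dx_1(\infty)=0$) and $x_2$ into class I(i) (so $x_2\sim dP$, $Dx_2(\infty)=d>0$), i.e., the pattern \eqref{1.6}. Integrating $(px_2')'=qx_2$ from $t$ to $\infty$ yields
\[
Dx_2(t)=d-\int_t^\infty q(s)x_2(s)\,ds,
\]
so $qx_2\in L^1[a,\infty)$; combined with $x_2\sim dP$ this is exactly $\int_a^\infty q(t)P(t)\,dt<\infty$. (A parallel derivation starting from $x_1$ of class I(iii) uses a double integration, as sketched before the theorem.) The equivalence with $\int_a^\infty \rho(t)/p(t)\,dt<\infty$ is Remark~\ref{remark3.1}.

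For sufficiency, assume $\int_a^\infty q(t)P(t)\,dt<\infty$ and build $x_1,x_2$ separately. To produce $x_1$, pick $T$ with $\int_T^\infty \rho(s)/p(s)\,ds\le 1/4$, and on $\mathcal{U}=\{u\in C_0[T,\infty):-\rho\le u\le -\rho/2\}$ consider the integral reformulation $Fu(t)=-\rho(t)+\int_t^\infty u(s)^2/p(s)\,ds$ of (R1) with terminal condition $u(\infty)=0$. The smallness of $\int_T^\infty\rho/p$ together with the bound $|u_1^2-u_2^2|\le 2\rho|u_1-u_2|$ on $\mathcal{U}$ makes $F$ both a self-map of $\mathcal{U}$ and a $\tfrac12$-contraction in the sup norm, giving a unique fixed point $u$, which is a global negative solution of (R1) with $u\sim -\rho$. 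Since $u/p\in L^1$, reproducing formula \eqref{2.5} gives $x_1(t)=\exp\bigl(-\int_t^\infty u(s)/p(s)\,ds\bigr)$ with $x_1(\infty)=1$ and $Dx_1=ux_1\sim -\rho$, matching \eqref{3.21}.

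For $x_2$ I repeat the scheme on (R2) in the weighted space $C_P[T,\infty)$: choose $T$ so that $\int_T^\infty qP\,ds\le 1/4$; on $\mathcal{V}=\{v:P/2\le v\le P\}$ the map $Gv(t)=P(t)-\int_T^t q(s)v(s)^2\,ds$ is, by the same square-difference estimate, a self-map of $\mathcal{V}$ and a $\tfrac12$-contraction in $\|\cdot\|_P$, producing a fixed point $v$ solving (R2). The nontrivial step, which I expect to be the real obstacle, is the asymptotic refinement $v\sim P$: the fixed-point bound only gives $P/2\le v\le P$, and strict equivalence requires showing $P(t)^{-1}\int_T^t q(s)P(s)^2\,ds\to 0$, which I handle by splitting at a cut-off $t_\varepsilon$ beyond which the tail of $\int qP$ is below $\varepsilon/2$ and exploiting $P(t)\to\infty$ to kill the initial piece. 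With $v\sim P$ in hand, reproducing formula \eqref{2.9} yields $x_2(t)=v(t)\exp\bigl(-\int_t^\infty q(s)v(s)\,ds\bigr)\sim P(t)$ and $Dx_2\sim 1$, matching \eqref{3.22}. Finally, $x_1/x_2\to 0$ makes $\{x_1,x_2\}$ linearly independent, hence a basis; since $x_1$ is of class I(iii) and $x_2$ of class I(i), every nontrivial linear combination lies in class I(i) or I(iii), so every solution of (A) is moderate, closing the iff.
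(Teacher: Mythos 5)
Your proposal matches the paper's proof essentially step for step: the necessity direction integrates (A) once against the type-I(i) solution to extract integrability of $qP$; the sufficiency direction is the same pair of Banach fixed-point constructions — $F$ on $\mathcal{U}=\{-\rho\le u\le -\rho/2\}$ in $C_0[T,\infty)$ to get a global (R1) solution $u\sim-\rho$ producing $x_1$, and $G$ on $\mathcal{V}=\{P/2\le v\le P\}$ in the weighted space $C_P[T,\infty)$ to get a global (R2) solution $v$ producing $x_2$, with the identical $\varepsilon$-splitting argument to upgrade $P/2\le v\le P$ to $v\sim P$. The one small thing you add that the paper leaves tacit is the closing observation that linear combinations of an I(iii) and an I(i) solution remain moderate, which cleanly finishes the "only if/if" equivalence; this is a welcome clarification rather than a departure.
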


Let us now turn our attention to equation (A) with $q(t)$
satisfying $I_q=\infty$. Use is made of the notation
\begin{equation}\label{3.23}
Q(t)=\int_a^t q(s)ds,\quad t\geq a.
\end{equation}

The situation in which (A) has moderate solutions of the types
II(i) and II(iii) can be characterized as the following theorem
shows.

\begin{theorem}\label{theorem3.2}\;{\it Assume that $I_q=\infty$. All
solutions of equation} \rm (A) {\it are moderate if and only if
the condition}
\begin{equation}\label{3.24}
\int_a^{\infty}q(t)\pi(t)dt<\infty\quad \Bigl(or\;equivalently,
\int_a^{\infty}\frac{Q(t)}{p(t)}dt<\infty \Bigr),
\end{equation}
{\it is satisfied. In this case} (A) {\it has a moderate basis
consisting of solutions $x_1(t)$ and $x_2(t)$ which are reproduced
from global solutions of} (R1) ({\it or} (R2)) {\it and satisfy,
as $t\to\infty$,}
\begin{equation}\label{3.25}
x_1(t)\sim \pi(t),\quad Dx_1(t)\sim -1\,
\end{equation}
\begin{equation}\label{3.26}
x_2(t)\sim 1,\quad Dx_2(t)\sim Q(t).
\end{equation}
\end{theorem}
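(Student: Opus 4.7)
The plan is to mirror the proof of Theorem~\ref{theorem3.1}, exchanging the roles of $P(t)$ and $\pi(t)$, and of $\rho(t)$ and $Q(t)$. This reflects the formal duality under the substitution $y=Dx$: equation (A) becomes $((1/q)y')'=(1/p)y$, which swaps the hypothesis $I_p=\infty$ with $I_q=\infty$ and swaps the roles of the Riccati pair (R1)--(R2). Note that the hypothesis $\int_a^\infty q(t)\pi(t)\,dt<\infty$ already presupposes $I_p<\infty$, and if $I_p=I_q=\infty$ Remark~2.1(i) rules out moderate solutions so the theorem is vacuous there. The equivalence of the two integrability conditions comes from integrating by parts ($\pi'=-1/p$, $Q'=q$) to obtain $\int_T^t q(s)\pi(s)\,ds+Q(T)\pi(T)-Q(t)\pi(t)=\int_T^t Q(s)/p(s)\,ds$, and combining with $0\leq Q(t)\pi(t)\leq\int_t^\infty Q(s)/p(s)\,ds$, exactly as in Remark~\ref{remark3.1}. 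For the necessity of the condition I would pick a principal solution $x_1$ of type II(iii), so that $x_1(t)\to 0^+$ and $Dx_1(t)\to -k<0$; integrating $x_1'=Dx_1/p\sim -k/p$ from $t$ to $\infty$ gives $x_1(t)\sim k\pi(t)$, and integrating (A) on $[T,t]$ then forces $\int^\infty qx_1\,ds<\infty$, hence $\int^\infty q(t)\pi(t)\,dt<\infty$.

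For the construction of the principal solution I would use (R2). Choosing $T$ so large that $\int_T^\infty q(s)\pi(s)\,ds\leq 1/4$ and working in the Banach space of continuous functions with norm $\|v\|_\pi=\sup_{t\geq T}|v(t)|/\pi(t)$, I would verify that the operator $F_1 v(t)=-\pi(t)+\int_t^\infty q(s)v(s)^2\,ds$ maps the closed set $\mathcal{V}_1=\{v:-\pi(t)\leq v(t)\leq -\pi(t)/2\}$ into itself and contracts by factor $1/2$; both properties follow from the bound $\int_t^\infty q\pi^2\,ds\leq \pi(t)\int_t^\infty q\pi\,ds\leq \pi(t)/4$ and the estimate $|v_1^2-v_2^2|\leq 2\pi|v_1-v_2|$. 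The unique fixed point $v(t)$ is a negative global solution of (R2) with $v(t)\sim -\pi(t)$. Since $|qv|\leq q\pi$ is integrable, the convergent form of \eqref{2.9}, used up to an overall sign, produces $x_1(t)=-v(t)\exp(-\int_t^\infty q(s)v(s)\,ds)$, a positive principal solution of (A) with $x_1(t)\sim \pi(t)$ and $Dx_1(t)\sim -1$.

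For the nonprincipal solution I would symmetrically use (R1). In the weighted space with norm $\|u\|_Q=\sup|u|/Q$ on the closed set $\mathcal{U}_2=\{u:Q(t)/2\leq u(t)\leq Q(t)\}$, with $T$ chosen so that $\int_T^\infty Q(s)/p(s)\,ds\leq 1/4$, the operator $F_2 u(t)=Q(t)-\int_T^t u(s)^2/p(s)\,ds$ is a self-contraction thanks to $\int_T^t Q^2/p\,ds\leq Q(t)\int_T^t Q/p\,ds\leq Q(t)/4$. The resulting fixed point $u(t)$ is a positive solution of (R1) with $u(t)\sim Q(t)$, and since $u/p\leq Q/p$ is integrable, the convergent form of \eqref{2.5} yields $x_2(t)=\exp(-\int_t^\infty u(s)/p(s)\,ds)$ with $x_2(t)\to 1$ and $Dx_2(t)=u(t)x_2(t)\sim Q(t)$. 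The main obstacle is the choice of weighted invariant sets and norms that make both contractions close: in particular, the non-trivial estimate $\int_T^t Q(s)^2/p(s)\,ds\leq Q(t)\int_T^t Q(s)/p(s)\,ds$ in the nonprincipal case is precisely what allows the mere integrability $\int Q/p<\infty$ to control the quadratic nonlinearity in (R1) by the weight $Q(t)$ itself.
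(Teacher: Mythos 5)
Your proposal is correct and essentially coincides with the paper's proof: the principal solution is obtained as the contraction fixed point of $v\mapsto -\pi(t)+\int_t^\infty q(s)v(s)^2\,ds$ on $\{-\pi\le v\le -\pi/2\}$ and fed into \eqref{2.9}, while the nonprincipal solution comes from the fixed point of $u\mapsto Q(t)-\int_T^t u(s)^2/p(s)\,ds$ on $\{Q/2\le u\le Q\}$ with the weighted $\|\cdot\|_Q$ norm, fed into \eqref{2.5}. The only cosmetic departures from the paper are that you use the weighted norm $\|\cdot\|_\pi$ where the paper simply uses the sup norm on $C_0[T,\infty)$ for the (R2) fixed point, and that you dispose of the $I_p=\infty$ possibility by invoking Remark 2.1(i) rather than deriving $I_p<\infty$ directly from the boundedness of a type II(i) moderate solution.
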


\medskip

\begin{proof}\; (The "only if" part)\; Suppose that (A) has a positive
solution $x_1(t)$ of the type II(i) on $[T,\infty)$. It is bounded
since $x_1(\infty)=c$ for some $c>0$. Integrating (A) twice from
$T$ to $t$ one obtains
$$
x_1(t)=x_1(T)+\int_T^t \frac{Dx_1(T)}{p(s)}ds +\int_T^t
\frac{1}{p(s)}\int_T^s q(r)x_1(r)drds,\quad t\geq T,
$$
from which, in view of the boundedness of $x_1(t)$, it follows
that $I_p<\infty$ and
$$
\int_T^{\infty} \frac{1}{p(s)}\int_T^s q(r)drds<\infty.
$$
This clearly implies $\int_a^{\infty}Q(s)/p(s)ds<\infty$.

Let (A) possess a positive solution $x_2(t)$ of the type II(iii)
on $[T,\infty)$. Note that $Dx_2(\infty)=-d$ for some $d>0$ and
this implies
\begin{equation}\label{3.27}
\lim_{t\to\infty}\frac{x_2(t)}{\pi(t)}=d.
\end{equation}
Integrating (A) on $[T,\infty)$ one gets
$\int_T^{\infty}q(s)x_2(s)ds=-d-D(T)<\infty$, which combined with
\eqref{3.27} shows that $\int_T^{\infty}q(s)\pi(s)ds<\infty$.

(The "if" part)\; Assume that \eqref{3.24} holds. We first
construct a principal solution of (A) by using a solution of the
second Riccati equation (R2). Choose $T>a$ so that
\begin{equation}\label{3.28}
\int_T^{\infty}q(s)\pi(s)ds\leq \frac{1}{4},
\end{equation}
and look for a solution of the integral equation
\begin{equation}\label{3.29}
v(t)=-\pi(t)+\int_t^{\infty}q(s)v(s)^2ds,\quad t\geq T,
\end{equation}
lying in the set
\begin{equation}\label{3.30}
\mathcal{V}=\bigl\{v\in C_0[T,\infty):-\pi(t)\leq v(t)\leq
-\frac{1}{2}\pi(t),\;\; t\geq T \bigr\}.
\end{equation}
Consider the integral operator $G$ given by
\begin{equation}\label{3.31}
Gv(t)=-\pi(t)+\int_t^{\infty}q(s)v(s)^2ds,\quad t\geq T.
\end{equation}
Let $v\in \mathcal{V}$. Since
$$
\int_t^{\infty}q(s)v(s)^2ds\leq \int_t^{\infty}q(s)\pi(s)^2ds \leq
\pi(t)\int_t^{\infty}q(s)\pi(s)ds\leq \frac{1}{4}\pi(t),\quad
t\geq T,
$$
we have $-\pi(t)\leq Gv(t)\leq -\pi(t)/2$, $t\geq T$, which shows
that $G$ is a self-map of $\mathcal{V}$. If
$v_1,\;v_2\in\mathcal{V}$, then,
$$
|Gv_1(t)-Gv_2(t)|\leq \int_t^{\infty}q(s)|v_1(s)^2-v_2(s)^2|ds
\leq 2\int_t^{\infty}q(s)\pi(s)|v_1(s)-v_2(s)|ds
$$
$$
\leq 2\int_t^{\infty}q(s)\pi(s)ds{\cdot}\|v_1-v_2\|_0\leq
\frac{1}{2}\|v_1-v_2\|_0, \quad t\geq T,
$$
which implies that
$$
\|Gv_1-Gv_2\|_0\leq \frac{1}{2}\|v_1-v_2\|_0.
$$
This means that $G$ is a contraction of $\mathcal{V}$.
Consequently, $G$ has a unique fixed point $v\in \mathcal{V}$,
which satisfies \eqref{3.29} and hence gives a solution of (R2) on
$[T,\infty)$. Note that \eqref{3.29} and \eqref{3.30} guarantee
that $-v(t)\sim \pi(t)$ as $t\to\infty$. We now use the second
reproducing formula of \eqref{2.9} to form the function
\begin{equation}\label{3.32}
x_1(t)=-v(t)\exp\Bigl(-\int_t^{\infty}q(s)v(s)ds\Bigr), \quad
t\geq T.
\end{equation}
Then, $x_1(t)$ is a positive solution of (A) on $[T,\infty)$ and
satisfies $x_1(t)\sim \pi(t)$ as $t\to\infty$. It is easy to see
that $Dx_1(t)\sim -1$ as $t\to\infty$.

Put $u(t)=1/v(t)$. Then, $u(t)$ is a solution of (R1) and (3.32)
is rewritten as
$$
x_1(t)=-\frac{1}{u(t)}\exp\Bigl(-\int_t^{\infty}\frac{q(s)}{u(s)}ds\Bigr).
$$
So we may assert that $x_1(t)$ can be reproduced from a global
solution of (R1).

To construct a nonprincipal solution of (A) under condition
\eqref{3.24} use is made of the first Riccati equation (R1).
Choose $T>a$ so that
\begin{equation}\label{3.33}
\int_T^{\infty}\frac{Q(s)}{p(s)}ds\leq \frac{1}{4},
\end{equation}
and define the integral operator $F$ and the set $\mathcal{U}$ by
\begin{equation}\label{3.34}
Fu(t)=Q(t)-\int_T^t \frac{u(s)^2}{p(s)}ds,\quad t\geq T,
\end{equation}
and
\begin{equation}\label{3.35}
\mathcal{U}=\bigl\{u\in C_Q[T,\infty):\frac{1}{2}Q(t)\leq u(t)\leq
Q(t),\;\; t\geq T \bigr\},
\end{equation}
respectively. Here $C_Q[T,\infty)$ denotes the Banach space of
continuous functions $u(t)$ on $[T,\infty)$ satisfying
$\|u\|_Q=\sup\{|u(t)|/Q(t):t\geq T\}<\infty$. Let $u\in
\mathcal{U}$. Since by \eqref{3.33}
$$
\int_T^t \frac{u(s)^2}{p(s)}ds\leq \int_T^t \frac{Q(s)^2}{p(s)}ds
\leq Q(t)\int_T^t \frac{Q(s)}{p(s)}ds\leq \frac{1}{4}Q(t),\quad
t\geq T,
$$
one sees that $Q(t)\geq Fu(t)\geq Q(t)/2$, $t\geq T$. This shows
that $Fu\in \mathcal{U}$, implying that $F$ maps $\mathcal{U}$
into itself. Let $u_1,\;u_2\in \mathcal{U}$. Then, using
\eqref{3.32} again, we obtain
$$
\frac{|Fu_1(t)-Fu_2(t)|}{Q(t)}\leq \frac{1}{Q(t)}\int_T^t
\frac{u_1(s)+u_2(s)}{p(s)}|u_1(s)-u_2(s)|ds
$$
$$
\leq 2\int_T^t
\frac{Q(s)}{p(s)}{\cdot}\frac{|u_1(s)-u_2(s)|}{Q(s)}ds \leq
\frac{1}{2}\|u_1-u_2\|_Q,\quad t\geq T,
$$
which implies that
$$
\|Fu_1-Fu_2\|_Q \leq \frac{1}{2}\|u_1-u_2\|_Q.
$$
Therefore, $F$ has a unique fixed point $u\in \mathcal{U}$ which
satisfies
$$
u(t)=Q(t)-\int_T^t \frac{u(s)^2}{p(s)}ds,\quad t\geq T,
$$
and hence gives a global solution of the Riccati equation (R1) on
$[T,\infty)$. To obtain a positive solution of the type II(i) of
(A) it suffices to construct the function
\begin{equation}\label{3.36}
x_2(t)=\exp\Bigl(-\int_t^{\infty} \frac{u(s)}{p(s)}ds\Bigr),\quad
t\geq T,
\end{equation}
according to the second reproducing formula in \eqref{2.5}. It is
clear that $x_2(t)\sim 1$ as $t\to\infty$. Its quasi-derivative is
given by $Dx_2(t)=u(t)\exp(-\int_t^{\infty}u(s)/p(s)ds)$. Since
$$
\lim_{t\to\infty}\frac{1}{Q(t)}\int_T^t \frac{u(s)^2}{p(s)}ds=0,
$$
(see the second half of the proof of Theorem 3.1), it holds that
$u(t)\sim Q(t)$ as $t\to\infty$. This fact can be used to confirm
that $Dx_2(t)\sim Q(t)$ as $t\to\infty$. This completes the proof.
\end{proof}

It remains to analyze the asymptotic behavior of solutions of (A)
with $p(t)$ and $q(t)$ satisfying $I_p<\infty\wedge I_q<\infty$.
We know that in this case all solutions of (A) are moderate and
their terminal states are classified into the three types III(i),
III(ii) and III(iii). It will be shown that all of these types of
solutions of (A) can be reproduced from suitable global solutions
of the Riccati equations (R1) and (R2).

Let $x(t)$ be a solution of the type III(i) on $[T,\infty)$. We
may assume that $x(t)>0$ and $Dx(t)>0$ for $t\geq T$, so that
$x(\infty)=c$ and $Dx(\infty)=d$ for some constants $c>0$ and
$d>0$. To reproduce this solution from a solution of the Riccati
equation (R1) we proceed as follows. Let $\omega=d/c$. Choose
$T>a$ so that
$$
\rho(T)\leq \frac{\omega}{4}\quad \textrm{and}\quad \pi(t)\leq
\frac{1}{9\omega},
$$
define the integral operator $F_1$ by
\begin{equation}\label{3.37}
F_1u(t)=\omega-\rho(t)+\int_T^{\infty}\frac{u(s)^2}{p(s)}ds,\quad
t\geq T,
\end{equation}
and let $F_1$ act on the set
\begin{equation}\label{3.38}
\mathcal{U}_1=\bigl\{u\in C_b[T,\infty): |u(t)-\omega|\leq
\frac{\omega}{2},\;\; t\geq T\bigr\}.
\end{equation}
As is readily shown that $F_1(\mathcal{U}_1)\subset \mathcal{U}_1$
and that if $u_1, u_2\in \mathcal{U}_1$, then
$$
\|F_1u_1-F_1u_2\|_b\leq \frac{1}{3}\|u_1-u_2\|_b.
$$
By the contraction principle there exists a unique $u_1\in
\mathcal{U}_1$ satisfying the integral equation
\begin{equation}\label{3.39}
u_1(t)=\omega-\rho(t)+\int_T^{\infty}\frac{u_1(s)^2}{p(s)}ds,\quad
t\geq T,
\end{equation}
and hence the Riccati equation (R1) on $[T,\infty)$. With this
$u_1(t)$ construct a function $x_1(t)$ by
\begin{equation}\label{3.40}
x_1(t)=c\exp\Bigl(-\int_t^{\infty}\frac{u_1(s)}{p(s)}ds\Bigr),\quad
t\geq T.
\end{equation}
It is clear that $x_1(t)$ is a solution of (A) such that
$x_1(\infty)=c$. This $x_1(t)$ is the desired solution of the type
III(i) of (A) since its quasi-derivative
\begin{equation}\label{3.41}
Dx_1(t)=cu_1(t)\exp\Bigl(-\int_t^{\infty}\frac{u_1(s)}{p(s)}ds\Bigr),\quad
t\geq T,
\end{equation}
satisfies $Dx_1(\infty)=c\omega=d$.

Let $x(t)$ be any type-III(ii) positive solution of (A) on
$[T,\infty)$. It satisfies $x(\infty)=0$ and $Dx(\infty)=-d$ for
some $d>0$. Our aim is to reproduce $x(t)$ from a solution of the
Riccati equation (R2). Let $T>a$ be large enough that
$\pi(T)\rho(T)\leq 1/4$, define the integral operator $G$ and the
set $\mathcal{V}$ by
\begin{equation}\label{3.42}
Gv(t)=-\pi(t)+\int_t^{\infty}q(s)v(s)^2ds,\quad t\geq T,
\end{equation}
and
\begin{equation}\label{3.43}
\mathcal{V}=\bigl\{v\in C_0[T,\infty): -\pi(t)\leq v(t)\leq
-\frac{1}{2}\pi(t),\;\; t\geq T\bigr\}.
\end{equation}
Show that $G$ is a self-map of $\mathcal{V}$ and that
$$
\|Gv_1-Gv_2\|_0\leq \frac{1}{2}\|v_1-v_2\|_0\quad
\textrm{for\;any}\;\; v_1, v_2\in \mathcal{V}.
$$
Let $v$ denote the unique fixed point of $G$ in $\mathcal{V}$.
Then, it is a solution of (R2) on $[T,\infty)$. With this $v(t)$
form a solution of (A)
\begin{equation}\label{3.44}
x_2(t)=-dv(t)\exp\Bigl(-\int_t^{\infty}q(s)v(s)ds\Bigr),\quad
t\geq T,
\end{equation}
according to the reproducing formula \eqref{2.11}. Clearly,
$x_2(t)$ is a positive solution of (A) on $[T,\infty)$ satisfying
$x_2(\infty)=0$. Since
\begin{equation}\label{3.45}
Dx_2(t)=-d\exp\Bigl(-\int_t^{\infty}q(s)v(s)ds\Bigr),\quad t\geq
T,
\end{equation}
we see that $Dx_2(\infty)=-d$. Therefore $x_2(t)$ is a desired
solution of the type III(ii).

Finally let $x(t)$ be any positive III(iii)-type solution on
$[T,\infty)$. There is a constant $c>0$ such that $x(\infty)=c$
and $Dx(\infty)=0$. Choose $T>a$ so that $\rho(T)\pi(T)\leq 1/4$.
Then, one proves that the integral operator $F_2$ defined by
\begin{equation}\label{3.46}
Fu_2(t)=-\rho(t)+\int_t^{\infty}\frac{u(s)^2}{p(s)}ds,\quad t\geq
T.
\end{equation}
is a contraction on the set
\begin{equation}\label{3.47}
\mathcal{U}_2=\{u\in C_0[T,\infty):-\rho(t)\leq u(t)\leq
-\frac{1}{2}\rho(t),\;\; t\geq T\}.
\end{equation}
Therefore $F_2$ has a unique fixed point $u_2\in \mathcal{U}_2$
which reproduces a solution of (A)
\begin{equation}\label{3.48}
x_3(t)=c\exp\Bigl(-\int_t^{\infty}\frac{u_2(s)}{p(s)}ds \Bigr),
\quad t\geq T,
\end{equation}
whose quasi-derivative is given by
\begin{equation}\label{3.49}
Dx_3(t)=cu_2(t)\exp\Bigl(-\int_t^{\infty}\frac{u_2(s)}{p(s)}ds
\Bigr), \quad t\geq T,
\end{equation}
This solution $x_3(t)$ is of the type III(iii) since
$x_3(\infty)=c$ and $Dx_3(\infty)=0$.

The above discussions are summarized in the following theorem.

\begin{theorem}\label{theorem3.3}\; {\it Assume that $I_p<\infty\wedge
I_q<\infty$. All solutions of} \rm (A) {\it are divided into the
three classes consisting of moderate solutions of the types}
III(i), III(ii) {\it and} III(iii). {\it These classes are
represented, respectively, by the solutions $x_1(t)$, $x_2(t)$ and
$x_3(t)$ of} \rm (A) {\it which are reproduced from global
solutions of} \rm (R1) ({\it or} \rm (R2)) {\it having the
specified asymptotic behavior as $t\to\infty$,}
\begin{equation}\label{3.50}
x_1(t)\sim 1,\quad Dx_1(t)\sim 1.
\end{equation}
\begin{equation}\label{3.51}
x_2(t)\sim \pi(t),\quad Dx_2(t)\sim -1,
\end{equation}
\begin{equation}\label{3.52}
x_3(t)\sim 1,\quad Dx_3(t)\sim \rho(t).
\end{equation}
\end{theorem}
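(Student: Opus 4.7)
The plan is to produce one representative solution for each asymptotic class III(i), III(ii), III(iii); the fact that every nonoscillatory solution of (A) falls into exactly one of these three classes has already been established in Subsection 2.1, so only the existence part is left. Each representative will be obtained as the image, under one of the reproducing formulas from Subsection 2.2, of a global solution of the Riccati equation (R1) or (R2), and each such Riccati solution will in turn be constructed on a tail interval $[T,\infty)$ by the Banach contraction principle, with $T$ chosen so large that the relevant tail quantities $\rho(T)$, $\pi(T)$, $\int_T^{\infty} q\pi\,ds$ and $\int_T^{\infty} \rho/p\,ds$ are small (all permitted by $I_p<\infty$ and $I_q<\infty$).

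For the type III(i) representative I would target $x_1\to 1$, $Dx_1\to 1$, for which the variable $u=px'/x$ should tend to the positive constant $\omega=1$. I would therefore look for a fixed point of
\begin{equation*}
F_1 u(t)=\omega-\rho(t)+\int_t^{\infty}\frac{u(s)^2}{p(s)}\,ds
\end{equation*}
on the closed ball $\mathcal{U}_1=\{u\in C_b[T,\infty):|u(t)-\omega|\le\omega/2\}$. The smallness of $\rho(T)$ together with $\pi(T)$ makes $F_1$ into a self-map of $\mathcal{U}_1$ and a contraction with ratio strictly less than $1$; the fixed point $u_1$ solves (R1), satisfies $u_1(\infty)=\omega$, and the reproducing formula \eqref{2.5} yields $x_1(t)=\exp\bigl(-\int_t^{\infty}u_1(s)/p(s)\,ds\bigr)$ with the desired asymptotics.

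For III(ii) the target $x_2\sim\pi(t)$, $Dx_2\to -1$ forces $v=x/(px')\sim -\pi(t)$, so I would invoke (R2) and seek a fixed point of
\begin{equation*}
Gv(t)=-\pi(t)+\int_t^{\infty}q(s)v(s)^2\,ds
\end{equation*}
in $\mathcal{V}=\{v\in C_0[T,\infty):-\pi(t)\le v(t)\le -\pi(t)/2\}$. Monotonicity of $\pi$ gives $\int_t^{\infty}q\pi^2\,ds\le \pi(t)\int_t^{\infty}q\pi\,ds$, and integration by parts yields $\int_t^{\infty}q(s)\pi(s)\,ds=\rho(t)\pi(t)+\int_t^{\infty}\rho(s)/p(s)\,ds$, which is small for large $T$; this supplies self-mapping and a contraction constant at most $1/2$. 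Multiplying the corresponding fixed point by $-1$ and applying \eqref{2.9} produces $x_2$. Case III(iii) is symmetric to III(ii), with the roles of $p$ and $q$ and of $\pi$ and $\rho$ interchanged: run the same contraction argument for $F_2 u(t)=-\rho(t)+\int_t^{\infty}u(s)^2/p(s)\,ds$ on $\{u\in C_0[T,\infty):-\rho(t)\le u(t)\le -\rho(t)/2\}$, then apply \eqref{2.5} to obtain $x_3$ with $x_3(\infty)$ a positive constant and $Dx_3\sim\rho$.

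The main obstacle is the contraction estimate in each of the three settings; once that is done, transcribing the asymptotics of the Riccati fixed point into the asymptotics of $x_i$ and $Dx_i$ is immediate from the reproducing formulas, and in particular the asymptotic equivalences $u_1\sim\omega$, $-v\sim\pi$, $-u\sim\rho$ translate (via exponentials whose integrands are tail-integrable) into $x_i(\infty)$ being the prescribed positive constant or zero. No extra hypothesis beyond $I_p<\infty\wedge I_q<\infty$ is needed, because this single condition controls all four tail quantities simultaneously.
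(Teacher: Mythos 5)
Your construction is essentially identical to the paper's own proof: the same three Riccati fixed‑point problems for $F_1$ on $\{|u-\omega|\le\omega/2\}$, $G$ on $\{-\pi\le v\le -\pi/2\}$, and $F_2$ on $\{-\rho\le u\le -\rho/2\}$, with the same smallness conditions on $\rho(T)$, $\pi(T)$ and $\rho(T)\pi(T)$ and the same reproducing formulas \eqref{2.5}, \eqref{2.9}. One small slip: the integration-by-parts identity should read $\int_t^{\infty}q(s)\pi(s)\,ds=\rho(t)\pi(t)-\int_t^{\infty}\rho(s)/p(s)\,ds$ (minus, not plus), which in fact gives the cleaner bound $\int_t^{\infty}q\pi\,ds\le\rho(t)\pi(t)$ that the paper implicitly uses; your version still yields the needed smallness, so the argument survives.
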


\smallskip

\begin{remark}\label{remark3.2}\;\; Of the above three solutions, $x_2(t)$
is a principal solution, while $x_1(t)$ and $x_3(t)$ are
nonprincipal solutions. It seems natural to adopt
$\{x_2(t),x_3(t)\}$ as a basis for the solution space
$\mathcal{S}$(A) of (A) in the case $I_p<\infty\wedge I_q<\infty$.
\end{remark}

\medskip

\section{Extreme solutions of equation (A)}

\indent

By definition a solution $x(t)$ of (A) is {\it extreme} if it has
the terminal state
\begin{center}
(a)\;\;$(|x(\infty)|=0,\;|Dx(\infty)|=0)$\quad or \quad
(b)\;\;$(|x(\infty)|=\infty,\;|Dx(\infty)|=\infty)$.
\end{center}
For simplicity a solution $x(t)$ satisfying (a) or (b) is termed,
respectively, a {\it decaying} extreme solution or a {\it growing}
extreme solution of (A).

All that are known at this stage about the existence of extreme
solutions of (A) are listed in the following

\begin{theorem}\label{theorem4.1}\; {\rm (i)}\; {\it If $I_p<\infty\wedge I_q<\infty$, then} \rm (A) {\it has
no extreme solutions}.

{\rm (ii)}\; {\it If $I_p=\infty\wedge I_q=\infty$, then all
solutions of} \rm (A) {\it are extreme and there exist both
decaying and growing extreme solutions}.

{\rm (iii)}\; {\it Let $I_p=\infty\wedge I_q<\infty$. All
solutions of} \rm (A) {\it are extreme, and there exist both
decaying and growing extreme solutions if and only if}
\begin{equation}\label{4.1}
\int_a^{\infty}q(t)P(t)dt=\infty \quad \bigg( or\ equivalently,
\int_a^{\infty}\frac{\rho(t)}{p(t)}dt=\infty \bigg).
\end{equation}

{\rm (iv)}\; {\it Let $I_p<\infty\wedge I_q=\infty$. All solutions
of} \rm (A) {\it are extreme and there exist both decaying and
growing extreme solutions if and only if}
\begin{equation}\label{4.2}
\int_a^{\infty}q(t)\pi(t)dt=\infty \quad \bigg(or \ equivalently,
\  \int_a^{\infty}\frac{Q(t)}{p(t)}dt=\infty \bigg).
\end{equation}
\end{theorem}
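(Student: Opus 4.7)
My plan is to dispatch the four parts by combining Remark 2.1, the exhaustive terminal-state tables of Section 2.1, and Theorems 3.1 and 3.2 applied contrapositively; the theorem is essentially bookkeeping on top of work already done.

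Part (i) is immediate from Remark 2.1(ii), which asserts that under $I_p<\infty\wedge I_q<\infty$ every solution is moderate, excluding extreme solutions. Part (ii) is likewise essentially tabular: Remark 2.1(i) says that under $I_p=I_q=\infty$ the only realizable terminal states are I(ii)=II(ii) and I(iv)=II(iv), both extreme by definition, so every solution of (A) is extreme. To exhibit both kinds simultaneously I invoke the basis $\{x_1,x_2\}$ with $x_1$ a positive decreasing principal solution and $x_2$ a positive increasing nonprincipal solution (guaranteed by items (1)--(2) of Section 1): since $x_1 Dx_1<0$ on some $[T,\infty)$ the state is I(iv), a decaying extreme solution, while $x_2 Dx_2>0$ forces $x_2$ into type I(ii), a growing extreme solution.

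Parts (iii) and (iv) are parallel, so I describe only (iii). Because $I_p=\infty$ we are in case (I) of Section 2.1, so each nontrivial solution has a terminal state among I(i)--I(iv). The crucial observation, already implicit in the discussion preceding Theorem 3.1, is that the existence of \emph{even one} solution of type I(i) or of type I(iii) already forces $\int_a^{\infty}q(t)P(t)\,dt<\infty$: the derivation there applies solution by solution, not just to a full basis. Taking the contrapositive, if $\int_a^{\infty}q(t)P(t)\,dt=\infty$ then no solution is moderate, hence every solution is of type I(ii) or I(iv) and is extreme. To produce both kinds simultaneously I reuse the principal/nonprincipal basis: the positive decreasing principal solution satisfies $x_1 Dx_1<0$, so its terminal state is I(iii) or I(iv), and since I(iii) is excluded it is decaying extreme (I(iv)); the positive increasing nonprincipal solution is likewise forced from \{I(i), I(ii)\} into growing extreme (I(ii)). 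Conversely, if $\int_a^{\infty}q(t)P(t)\,dt<\infty$ then Theorem 3.1 provides a moderate basis and thus no extreme solution exists at all, so the right-hand conjunction fails. Part (iv) proceeds identically after replacing case (I), Theorem 3.1 and $\int q P$ by case (II), Theorem 3.2 and $\int q\pi$, using the equivalence $\int q\pi<\infty\Leftrightarrow\int Q/p<\infty$ stated in (3.24).

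The only delicate point I foresee is extracting from the proof of Theorem 3.1 the solutionwise implication ``a single moderate solution forces $\int_a^{\infty}q(t)P(t)\,dt<\infty$'' (and its analogue for Theorem 3.2); once that reading is secured, the remaining content of Theorem 4.1 is a direct combinatorial consequence of the terminal-state classification and the existence of a principal/nonprincipal basis.
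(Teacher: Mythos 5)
Your proof is correct and follows the same route the paper sketches: the paper dismisses Theorem~4.1 as having ``little substance,'' saying only that (iii) and (iv) follow automatically from the Section~3 theorems together with the fact that moderate and extreme solutions cannot coexist, and never spells out the argument. Your write-up supplies exactly the bookkeeping that remark presupposes --- Remark~2.1 for (i) and the first half of (ii), the solution-by-solution necessity computation at the start of Section~3 to get ``one moderate solution forces $\int qP<\infty$'' (and its $I_q=\infty$ analogue), and the decreasing-principal/increasing-nonprincipal basis from Section~1 item (2) to force one component into the decaying extreme class and the other into the growing extreme class --- so it is essentially the paper's own argument made explicit.
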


This theorem has little substance. For example, the propositions
(iii) and (iv) automatically follow from Theorems 3.2 and 3.3
based on the fact that extreme solutions and moderate solutions
cannot coexist for (A), and no information is available about how
to construct and determine their asymptotic behavior of such
solutions. As far as we know, no serious asymptotic analysis seems
to have ever been made of extreme solutions of (A) in the existing
literature.

Unlike moderate solutions, it is very difficult to have a good
grasp of extreme solutions of (A) presumably because of lack of a
priori information about their precise asymptotic behaviors as
$t\to\infty$. However, we are able to indicate some special cases
of (A) for which the existence of extreme solutions can actually
be established with the help of the Riccati equations (R1) and
(R2). The first result concerns growing extreme solutions of (A)
satisfying \eqref{4.1}.

\medskip
\begin{theorem}\label{theorem4.2}\; {\it Assume that $I_p=\infty\wedge
I_q<\infty$. In addition to} \eqref{4.1} {\it suppose that there
is a constant $\gamma\in (0,1)$ such that}
\begin{equation}\label{4.3}
\int_a^t q(s)P(s)^2ds\leq \gamma P(t)\;\;\;for\;all\;large\;\;t,
\end{equation}
{\it Then, equation} \rm (A) {\it possesses a positive growing
extreme solution $x(t)$ satisfying $x(\infty)=Dx(\infty)=\infty$.}
\end{theorem}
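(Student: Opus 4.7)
My plan is to mirror the constructive approach of Section 3: manufacture a positive global solution $v(t)$ of the second Riccati equation (R2), then reproduce an extreme solution of (A) via the formula \eqref{2.9}. Choose $T\ge a$ large enough that \eqref{4.3} holds for all $t\ge T$, and consider the closed convex set
\[
\mathcal{V}=\bigl\{v\in C([T,\infty)):(1-\gamma)P(t)\le v(t)\le P(t),\ t\ge T\bigr\},
\]
together with the operator
\[
(Gv)(t)=P(t)-\int_T^t q(s)v(s)^2\,ds,\qquad t\ge T.
\]
Any fixed point of $G$ in $\mathcal{V}$ is automatically $C^1$ and, upon differentiating the integral identity, satisfies (R2) on $[T,\infty)$.

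The self-mapping property is immediate from \eqref{4.3}: for $v\in\mathcal{V}$ one has $v(s)^2\le P(s)^2$, so $\int_T^t q(s)v(s)^2\,ds\le \gamma P(t)$, and hence $(1-\gamma)P(t)\le (Gv)(t)\le P(t)$. The main obstacle is the existence of a fixed point. Since $\gamma$ is only assumed to lie in $(0,1)$, repeating the Lipschitz computation of Section 3 on the weighted space $C_P[T,\infty)$ gives the constant $2\gamma$, which need not be less than $1$, so the Banach contraction argument is no longer available. I would replace it by the Schauder--Tychonoff theorem on the Fr\'echet space $C([T,\infty))$ with the topology of locally uniform convergence: $\mathcal{V}$ is closed and convex, $G$ is continuous by dominated convergence, and $G(\mathcal{V})$ is relatively compact because on every compact subinterval the derivative $(Gv)'(t)=1/p(t)-q(t)v(t)^2$ is bounded uniformly in $v\in\mathcal{V}$ (giving equicontinuity via Arzel\`a--Ascoli) while $|(Gv)(t)|\le P(t)$ gives pointwise boundedness. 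A fixed point $v\in\mathcal{V}$ therefore exists.

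With $v$ in hand, condition \eqref{4.1} together with the lower bound $v(s)\ge (1-\gamma)P(s)$ yields
\[
\int_T^{\infty}q(s)v(s)\,ds\ge (1-\gamma)\int_T^{\infty}q(s)P(s)\,ds=\infty,
\]
so the reproducing formula \eqref{2.9} can be used in its divergent form
\[
x(t)=v(t)\exp\!\Bigl(\int_T^t q(s)v(s)\,ds\Bigr),\qquad Dx(t)=\exp\!\Bigl(\int_T^t q(s)v(s)\,ds\Bigr),
\]
producing a positive solution of (A) on $[T,\infty)$. Reading off these formulas one sees at once that $Dx(t)\to\infty$, and therefore $x(t)=v(t)Dx(t)\ge (1-\gamma)P(T)\cdot Dx(t)\to\infty$ as well. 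Hence $x(t)$ is the required positive growing extreme solution of (A) with $x(\infty)=Dx(\infty)=\infty$.
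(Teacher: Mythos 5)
Your proposal is correct and follows essentially the same route as the paper: define the set $\mathcal{V}=\{v: (1-\gamma)P(t)\le v(t)\le P(t)\}$ and the operator $Gv(t)=P(t)-\int_T^t q(s)v(s)^2\,ds$, invoke the Schauder--Tychonoff theorem to produce a fixed point solving (R2), and then reproduce a solution of (A) via \eqref{2.9} that diverges by \eqref{4.1}. Your remark that the contraction-mapping estimate only yields Lipschitz constant $2\gamma$, which is why the argument must switch from Banach to Schauder--Tychonoff, is a useful gloss on a point the paper leaves implicit, and your explicit check that $\int_T^\infty q(s)v(s)\,ds=\infty$ via $v\ge(1-\gamma)P$ fills in a step the paper merely asserts; otherwise the two proofs coincide.
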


\begin{proof}\; Choose $T>a$ so that \eqref{4.3} holds for $t\geq T$. Let
$\mathcal{V}$ denote the set
\begin{equation}\label{4.4}
\mathcal{V}=\{v\in C[T,\infty): (1-\gamma)P(t)\leq v(t)\leq
P(t),\;\;t\geq T\},
\end{equation}
which is a closed convex subset of the locally convex space
$C[T,\infty)$ with the topology of uniform convergence on compact
subintervals of $[T,\infty)$. Define the integral operator
$G:\mathcal{V}\to C[T,\infty)$ by
\begin{equation}\label{4.5}
Gv(t)=P(t)-\int_T^t q(s)v(s)^2ds,\quad t\geq T.
\end{equation}

It can be shown that $G$ is a continuous self-map of $\mathcal{V}$
sending $\mathcal{V}$ into a relatively compact subset of
$C[T,\infty)$.

(i)\; Since by \eqref{4.4} $v\in \mathcal{V}$ implies
$$
\int_T^t q(s)v(s)^2ds\leq \int_T^t q(s)P(s)^2ds\leq \gamma
P(t),\quad t\geq T,
$$
we see that $P(t)\geq Gv(t)\geq (1-\gamma)P(t)$ for $t\geq T$.
This implies $G(\mathcal{V})\subset \mathcal{V}$.

(ii)\; Let $\{v_n(t)\}$ be a sequence in $\mathcal{V}$ such that
$v_n(t)\to v(t)$ as $n\to\infty$ uniformly on compact subintervals
of $[T,\infty)$. Noting that $|v_n(t)^2-v(t)^2|\leq 2P(t)^2$,
$t\geq T$, and $v_n(t)-v(t)\to 0$ at every point $t\in [T,\infty)$
as $n\to\infty$, and using the Lebesgue dominated convergence
theorem we see that
$$
|Gv_n(t)-Gv(t)|\leq \int_T^t q(s)|v_n(s)^2-v(s)^2|ds\to 0,\quad
t\to\infty,
$$
uniformly on any compact subinterval of $[T,\infty)$. This
establishes the continuity of $G$.

(iii)\; To prove the relative compactness of $G(\mathcal{V})$ it
suffices to show that $G(\mathcal{V})$ is locally uniformly
bounded and locally equicontinuous on $[T,\infty)$. The local
uniform boundedness is a consequence of the inclusion
$G(\mathcal{V})\subset \mathcal{V}$, while the local
equicontinuity follows from the inequality
$$
|(Gv)'(t)|\leq \frac{1}{p(t)}+q(t)P(t)^2,\quad t\geq T,
$$
holding for all $v\in \mathcal{V}$.

Therefore, by the Schauder-Tychonoff fixed point theorem (cf.
Coppel [3]), $G$ has a fixed element $v\in \mathcal{V}$ which
satisfies
$$
v(t)=P(t)-\int_T^t q(s)v(s)^2ds,\quad t\geq T,
$$
and hence the Riccati equation (R2) on $[T,\infty)$. Using this
$v(t)$ and the formula \eqref{2.9}, we reproduce a solution of
equation (A)
\begin{equation}\label{4.6}
x(t)=v(t)\exp\Bigl(\int_T^t q(s)v(s)ds\Bigr),\quad t\geq T,
\end{equation}
which has the quasi-derivative
\begin{equation}\label{4.7}
Dx(t)=\exp\Bigl(\int_T^t q(s)v(s)ds\Bigr), \quad t\geq T.
\end{equation}
It is clear that $x(\infty)=Dx(\infty)=\infty$ since
$v(\infty)=\infty$ and the integral $\int_T^t q(s)v(s)ds$ diverges
as $t\to\infty$. This proves Theorem 4.2.
\end{proof}

A decaying extreme solution of (A) satisfying \eqref{4.1} can be
reproduced from a solution of the Riccati equation (R1) as is
described in the following theorem.

\medskip

\begin{theorem}\label{theorem4.3}\; {\it Assume that $I_p=\infty\wedge
I_q<\infty$. Suppose in addition to} \rm (4.1) {\it that there is
a constant $\delta\in (0,1)$ such that}
\begin{equation}\label{4.8}
\int_t^{\infty} \frac{\rho(s)^2}{p(s)}ds\leq \delta
\rho(t)\;\;\;for\;all\;large \;\;\;t.
\end{equation}
{\it Then, equation} \rm (A) {\it possesses a positive decaying
extreme solution $x(t)$ satisfying $x(\infty)=Dx(\infty)=0$.}
\end{theorem}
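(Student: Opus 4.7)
My plan is to mirror the proof of Theorem \ref{theorem4.2} but to work this time with the first Riccati equation (R1) instead of (R2), producing a \emph{negative} global solution $u(t)$ whose magnitude is pinched from above by $\rho(t)$ and from below by $(1-\delta)\rho(t)$. The reproducing formula \eqref{2.5} will then convert $u(t)$ into the desired positive decaying extreme solution $x(t)$ of (A). Choose $T>a$ so that \eqref{4.8} is valid on $[T,\infty)$ and set
\begin{equation*}
\mathcal{U}=\bigl\{u\in C[T,\infty):\,-\rho(t)\le u(t)\le -(1-\delta)\rho(t),\,t\ge T\bigr\},
\end{equation*}
a closed convex subset of $C[T,\infty)$ with the topology of uniform convergence on compact subintervals; consider the integral operator
\begin{equation*}
Fu(t)=-\rho(t)+\int_t^{\infty}\frac{u(s)^2}{p(s)}ds,\quad t\ge T,
\end{equation*}
obtained by integrating (R1) from $t$ to $\infty$ under the natural boundary condition $u(\infty)=0$ (natural because $\rho(t)\to 0$).

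Next I would apply the Schauder--Tychonoff theorem to $F$ on $\mathcal{U}$, in the same spirit as in Theorem \ref{theorem4.2}. For $u\in\mathcal{U}$, hypothesis \eqref{4.8} gives
\begin{equation*}
0\le\int_t^{\infty}\frac{u(s)^2}{p(s)}ds\le\int_t^{\infty}\frac{\rho(s)^2}{p(s)}ds\le\delta\rho(t),
\end{equation*}
which pinches $Fu(t)$ between $-\rho(t)$ and $-(1-\delta)\rho(t)$ and so guarantees $F(\mathcal{U})\subset\mathcal{U}$. Continuity of $F$ follows from the Lebesgue dominated convergence theorem with $\rho(s)^2/p(s)$ as an integrable envelope, while relative compactness of $F(\mathcal{U})$ follows from Arzel\`a--Ascoli: local uniform boundedness is immediate from the defining inequalities, and local equicontinuity from the pointwise bound $|(Fu)'(t)|\le q(t)+\rho(t)^2/p(t)$, which is independent of $u\in\mathcal{U}$ and locally integrable. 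A fixed point $u\in\mathcal{U}$ of $F$ therefore exists, and differentiating $u(t)=-\rho(t)+\int_t^{\infty}u(s)^2/p(s)ds$ shows that $u$ satisfies the Riccati equation (R1) on $[T,\infty)$.

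To conclude I would invoke the other hypothesis. Since $u(t)\le -(1-\delta)\rho(t)$ on $[T,\infty)$, condition \eqref{4.1} forces $\int_T^{\infty}u(s)/p(s)ds=-\infty$, so the first branch of the reproducing formula \eqref{2.5} applies and produces the positive solution
\begin{equation*}
x(t)=\exp\Bigl(\int_T^t\frac{u(s)}{p(s)}ds\Bigr),\quad t\ge T,
\end{equation*}
of (A), with $x(\infty)=0$. Its quasi-derivative $Dx(t)=u(t)x(t)$ satisfies $|Dx(t)|\le\rho(t)x(t)\to 0$, because $\rho(t)\to 0$ (thanks to $I_q<\infty$) and $x(t)\to 0$; hence $x(\infty)=Dx(\infty)=0$, and $x$ is the required positive decaying extreme solution. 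The main obstacle is the self-mapping step $F(\mathcal{U})\subset\mathcal{U}$, which is precisely where hypothesis \eqref{4.8} does the real work, playing the role that \eqref{4.3} played in Theorem \ref{theorem4.2}; hypothesis \eqref{4.1} only enters at the very end, to rule out the moderate alternative $x(\infty)=\mathrm{const}>0$ and to force genuine decay of the reproduced solution.
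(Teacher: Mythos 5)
Your proposal is correct and follows exactly the paper's own route: the same integral operator $F$ and set $\mathcal{U}$, the Schauder--Tychonoff fixed point theorem (justified just as in the proof of Theorem~\ref{theorem4.2}), and the first reproducing formula in \eqref{2.5} together with condition \eqref{4.1} to force $x(\infty)=Dx(\infty)=0$. The only difference is that you spell out the continuity and equicontinuity estimates that the paper leaves as "as in the proof of the preceding theorem," and you add the explicit bound $|Dx(t)|\le\rho(t)x(t)$, which is a fine way to finish.
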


\medskip

\begin{proof}\; Choose $T>a$ so that \eqref{4.8} holds for
$t\geq T$, define the integral operator $F$ by
\begin{equation}\label{4.9}
Fu(t)=-\rho(t)+\int_t^{\infty}\frac{u(s)^2}{p(s)}ds,\quad t\geq T,
\end{equation}
and let it act on the set
\begin{equation}\label{4.10}
\mathcal{U}=\{u\in C[T,\infty): -\rho(t)\leq u(t)\leq
-(1-\delta)\rho(t),\;\; t\geq T\}.
\end{equation}
As in the proof of the preceding theorem it can be shown that $F$
is a continuous self-map of $\mathcal{U}$ with the property that
$F(\mathcal{U})$ is a relatively compact subset of $C[T,\infty)$.
Therefore, the Schauder-Tychonoff theorem ensures the existence of
a function $u\in \mathcal{U}$ such that $u=Fu$, i.e.,
\begin{equation}\label{4.11}
u(t)=-\rho(t)+\int_t^{\infty}\frac{u(s)^2}{p(s)}ds,\quad t\geq T,
\end{equation}
so that $u(t)$ is a negative solution of the Riccati equation (R1)
on $[T,\infty)$. Using the reproducing formula \eqref{2.5} with
this $u(t)$ construct a positive solution of equation (A)
\begin{equation}\label{4.12}
x(t)=\exp\Bigl(\int_T^t \frac{u(s)}{p(s)}ds\Bigr),\quad t\geq T,
\end{equation}
whose quasi-derivative is given by
\begin{equation}\label{4.13}
Dx(t)=u(t)\exp\Bigl(\int_T^t \frac{u(s)}{p(s)}ds\Bigr),\quad t\geq
T.
\end{equation}
Since by \eqref{4.1}
$$
\int_T^t \frac{u(s)}{p(s)}ds\leq -(1-\delta)\int_T^t
\frac{\rho(s)}{p(s)}ds \to -\infty,\quad t\to\infty,
$$
it follows that $x(\infty)=Dx(\infty)=0$, which means that $x(t)$
is a decaying extreme solution of (A). This proves Theorem 4.3.
\end{proof}

\medskip

There exists a situation in which equation (A) certainly possesses
a pair of decaying and growing extreme solutions.

\medskip

\begin{corollary}\label{corollary4.1}\; {\it Assume that $I_p=\infty\wedge
I_q<\infty$. If conditions} \eqref{4.1}, \eqref{4.3} {\it and}
\eqref{4.8} {\it are satisfied, then equation} \rm (A) {\it has an
extreme solution basis $\{x_1(t),x_2(t)\}$ such that
$x_1(\infty)=Dx_1(\infty)=0$ and
$x_2(\infty)=Dx_2(\infty)=\infty$.}
\end{corollary}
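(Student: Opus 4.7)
The corollary is essentially a direct amalgamation of Theorems 4.2 and 4.3, so the plan is to apply both and then verify linear independence.

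First I would invoke Theorem 4.3 under hypotheses \eqref{4.1} and \eqref{4.8} to obtain a positive decaying extreme solution $x_1(t)$ of (A), defined on some interval $[T_1,\infty)$, satisfying $x_1(\infty)=Dx_1(\infty)=0$. In parallel, I would apply Theorem 4.2 under hypotheses \eqref{4.1} and \eqref{4.3} to obtain a positive growing extreme solution $x_2(t)$ of (A), defined on some $[T_2,\infty)$, satisfying $x_2(\infty)=Dx_2(\infty)=\infty$. Setting $T=\max(T_1,T_2)$, both solutions are simultaneously defined on $[T,\infty)$. Note that the two sets of hypotheses used by the theorems are exactly those assumed in the corollary, so nothing further needs to be verified at this step.

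The only substantive task that remains is to check that $\{x_1(t),x_2(t)\}$ is indeed a basis for $\mathcal{S}(\textrm{A})$, i.e., that $x_1$ and $x_2$ are linearly independent. This will follow at once from their grossly different asymptotics. Since $x_1(t)\to 0$ and $x_2(t)\to\infty$ as $t\to\infty$, we have
\[
\lim_{t\to\infty}\frac{x_1(t)}{x_2(t)}=0.
\]
Suppose for contradiction that $\alpha x_1(t)+\beta x_2(t)\equiv 0$ on $[T,\infty)$ for some constants $\alpha,\beta$ not both zero. Dividing by $x_2(t)$ and letting $t\to\infty$, the first term vanishes and we are forced to $\beta=0$; but then $\alpha x_1(t)\equiv 0$, and since $x_1$ is positive on $[T,\infty)$ we also get $\alpha=0$, a contradiction. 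Hence $\{x_1,x_2\}$ is linearly independent and thus forms a basis for $\mathcal{S}(\textrm{A})$; both of its members are extreme in the sense of Section 2, so it is an extreme basis as claimed.

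I do not anticipate a main obstacle here: the heavy lifting—the construction of the two extreme solutions via contraction/Schauder–Tychonoff arguments applied to the Riccati equations (R1) and (R2)—has already been carried out in the proofs of Theorems 4.2 and 4.3. The only point requiring attention is the trivial linear-independence argument above, together with the bookkeeping to make sure both solutions live on a common interval $[T,\infty)$, which is handled simply by enlarging $T$.
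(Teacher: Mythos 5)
Your proof is correct and follows exactly the route the paper intends: Corollary 4.1 is stated immediately after Theorems 4.2 and 4.3 as their direct combination, and the paper offers no further argument. Your verification of linear independence via $x_1(t)/x_2(t)\to 0$ is exactly the basis criterion \eqref{1.1} recalled in Section 1, so the additional bookkeeping you supply (common interval, independence) is the right way to make the implicit proof explicit.
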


Our next task is to  the existence of decaying extreme solutions
$x(t)$ of for equation (A) satisfying \eqref{4.2}.

\smallskip

\begin{theorem}\label{theorem4.4}\; {\it Assume that $I_p<\infty\wedge
I_q=\infty$. Suppose in addition to} \eqref{4.2} {\it that that
there is a constant $\gamma\in (0,1)$ such that}
\begin{equation}\label{4.14}
\int_t^{\infty} q(s)\pi(s)^2 ds\leq \gamma
\pi(t)\;\;\;for\;all\;large\;\;t.
\end{equation}
{\it Then, equation} \rm (A) {\it possesses a positive decaying
extreme solution $x(t)$ satisfying $x(\infty)=Dx(\infty)=0$.}
\end{theorem}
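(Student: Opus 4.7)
The plan is to follow the template of Theorem 4.3, but with the roles of the two Riccati equations swapped: since we are now in the regime $I_p<\infty\wedge I_q=\infty$, where the natural ``small'' function is $\pi(t)$ (which tends to $0$) rather than $\rho(t)$, it is the second Riccati equation (R2) that admits a negative solution $v(t)\sim -\pi(t)$, and hypothesis \eqref{4.14} is exactly the size control needed for the relevant contraction-type argument. I would construct a negative solution $v(t)$ of (R2) on $[T,\infty)$ with $-\pi(t)\le v(t)\le -(1-\gamma)\pi(t)$ and then feed it into the reproducing formula \eqref{2.9} to produce the desired $x(t)$.

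The concrete steps are as follows. First, choose $T>a$ so that \eqref{4.14} holds for $t\ge T$, and set
\[
\mathcal{V}=\{v\in C[T,\infty):-\pi(t)\le v(t)\le -(1-\gamma)\pi(t),\ t\ge T\},
\]
a closed convex subset of $C[T,\infty)$ endowed with the topology of uniform convergence on compact subintervals. Define
\[
Gv(t)=-\pi(t)+\int_t^{\infty}q(s)v(s)^2\,ds, \qquad t\ge T.
\]
Second, verify that $G(\mathcal{V})\subset\mathcal{V}$: for $v\in\mathcal{V}$, the bound $v(s)^2\le\pi(s)^2$ together with \eqref{4.14} yields
\[
0\le\int_t^{\infty}q(s)v(s)^2\,ds\le\int_t^{\infty}q(s)\pi(s)^2\,ds\le \gamma\pi(t),
\]
so $-\pi(t)\le Gv(t)\le-(1-\gamma)\pi(t)$. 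Third, verify the hypotheses of the Schauder--Tychonoff theorem: continuity of $G$ follows from the dominated convergence theorem applied to $|v_n(s)^2-v(s)^2|\le 2\pi(s)^2$, while local equicontinuity of $G(\mathcal V)$ comes from the pointwise bound $|(Gv)'(t)|=|q(t)v(t)^2-(-\pi'(t))|\le q(t)\pi(t)^2+1/p(t)$, which is locally bounded; local uniform boundedness is automatic from $G(\mathcal V)\subset\mathcal V$. The fixed point $v\in\mathcal V$ then satisfies $v(t)=-\pi(t)+\int_t^{\infty}q(s)v(s)^2\,ds$, so $v$ is a negative global solution of (R2) with $v(t)\sim -\pi(t)$ as $t\to\infty$.

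Fourth, use (R2) and the reproducing formula \eqref{2.9} to define
\[
x(t)=-v(t)\exp\!\Bigl(\int_T^t q(s)v(s)\,ds\Bigr),\qquad t\ge T,
\]
which (after a direct computation using $v'=1/p-qv^2$) is a positive solution of (A) with quasi-derivative
\[
Dx(t)=-\exp\!\Bigl(\int_T^t q(s)v(s)\,ds\Bigr)<0.
\]
Fifth, check the extreme decaying behaviour. The factor $-v(t)\sim\pi(t)\to 0$ because $I_p<\infty$. For the exponent, the lower estimate $v(s)\le -(1-\gamma)\pi(s)$ gives
\[
\int_T^t q(s)v(s)\,ds\le -(1-\gamma)\int_T^t q(s)\pi(s)\,ds\longrightarrow -\infty
\]
by hypothesis \eqref{4.2}, so the exponential tends to $0$. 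Consequently $x(\infty)=0$ and $Dx(\infty)=0$, proving that $x$ is a positive decaying extreme solution of (A).

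The only genuinely delicate step is the third one, ensuring that the Schauder--Tychonoff framework really applies in the weighted setting where $\pi(t)\to 0$; a naive attempt to use a contraction on the Banach space with weight $\pi(t)$ (as was done in Theorem 3.2) is not available here because \eqref{4.14} is strictly weaker than the finiteness condition used there. This is precisely why the Schauder--Tychonoff approach of Theorems 4.2 and 4.3, rather than the Banach contraction principle, is the right tool, and why the bound \eqref{4.14} with $\gamma<1$ enters as the essential quantitative hypothesis.
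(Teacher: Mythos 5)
Your proposal is correct and follows essentially the same route as the paper: the paper also defines $Gv(t)=-\pi(t)+\int_t^{\infty}q(s)v(s)^2\,ds$ on $\mathcal V=\{v\in C[T,\infty):-\pi(t)\le v(t)\le-(1-\gamma)\pi(t)\}$, invokes Schauder--Tychonoff to get a fixed point $v$ solving (R2), and then sets $x(t)=-v(t)\exp\bigl(\int_T^t q(s)v(s)\,ds\bigr)$, concluding $x(\infty)=Dx(\infty)=0$ from $\pi(t)\to0$ and $\int_T^t q(s)\pi(s)\,ds\to\infty$. The paper merely sketches these steps, whereas you supply the self-map, continuity, and equicontinuity checks in detail; the substance is the same.
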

\vspace*{0.3cm}

\begin{theorem}\label{theorem4.5}\; {\it Assume that $I_p<\infty\wedge
I_q=\infty$. Suppose in addition to} \eqref{4.2} {\it that there
is a constant $\delta\in (0,1)$ such that}
\begin{equation}\label{4.15}
\int_a^t \frac{Q(s)^2}{p(s)}ds\leq \delta
Q(t)\;\;\;for\;all\;large\;\;t.
\end{equation}
{\it Then, equation} \rm (A) {\it possesses a positive growing
extreme solution $x(t)$ satisfying $x(\infty)=Dx(\infty)=\infty$.}
\end{theorem}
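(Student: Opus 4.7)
The proof will follow the template of Theorem 4.2, but with the roles of (R1) and (R2), and of $p$ and $q$, interchanged. Since we are now looking for a growing extreme solution in the setting $I_p<\infty\wedge I_q=\infty$, the natural Riccati equation to exploit is (R1), with the "driving term" $Q(t)$ playing the role that $P(t)$ played in Theorem 4.2.

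First I would fix $T>a$ so that the bound \eqref{4.15} holds for $t\ge T$, and introduce the closed convex subset
\begin{equation*}
\mathcal{U}=\{u\in C[T,\infty):(1-\delta)Q(t)\le u(t)\le Q(t),\ t\ge T\}
\end{equation*}
of $C[T,\infty)$ equipped with the topology of uniform convergence on compact subintervals. I would then define the integral operator
\begin{equation*}
Fu(t)=Q(t)-\int_T^t\frac{u(s)^2}{p(s)}\,ds,\qquad t\ge T,
\end{equation*}
and show that $F$ is a continuous self-map of $\mathcal{U}$ whose image lies in a relatively compact subset of $C[T,\infty)$. The self-map property is immediate from \eqref{4.15}: if $u\in\mathcal{U}$ then $\int_T^t u(s)^2/p(s)\,ds\le \int_T^t Q(s)^2/p(s)\,ds\le\delta Q(t)$, which gives $(1-\delta)Q(t)\le Fu(t)\le Q(t)$. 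Continuity follows from dominated convergence using the bound $|u_n(s)^2-u(s)^2|\le 2Q(s)^2$; local uniform boundedness is inherited from $F(\mathcal{U})\subset\mathcal{U}$; and local equicontinuity follows from the pointwise estimate $|(Fu)'(t)|\le q(t)+Q(t)^2/p(t)$, valid for every $u\in\mathcal{U}$.

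By the Schauder--Tychonoff fixed point theorem, there exists $u\in\mathcal{U}$ with $u=Fu$, and differentiation shows that $u(t)$ is a global solution of (R1) on $[T,\infty)$. Using the reproducing formula \eqref{2.5}, I would then form
\begin{equation*}
x(t)=\exp\!\Bigl(\int_T^t\frac{u(s)}{p(s)}\,ds\Bigr),\qquad t\ge T,
\end{equation*}
which is a positive solution of (A) with quasi-derivative $Dx(t)=u(t)x(t)$. To conclude that $x(t)$ is a growing extreme solution, I would use the lower bound $u(s)\ge(1-\delta)Q(s)$ together with condition \eqref{4.2} (in the equivalent form $\int_a^\infty Q(s)/p(s)\,ds=\infty$) to obtain $\int_T^t u(s)/p(s)\,ds\to\infty$, so that $x(\infty)=\infty$; and since $u(t)\ge(1-\delta)Q(t)\to\infty$ (because $I_q=\infty$), one also has $Dx(\infty)=u(\infty)x(\infty)=\infty$.

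The only mildly delicate step will be verifying the relative compactness of $F(\mathcal{U})$, which requires the pointwise derivative bound on $Fu$ to be genuinely independent of $u\in\mathcal{U}$ on every compact subinterval, and then invoking Arzel\`a--Ascoli; this is the same technical point that arises in Theorem 4.2 and should go through in exactly the same way. Everything else reduces to straightforward estimates and an application of \eqref{4.2}.
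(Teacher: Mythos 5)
Your proposal is correct and follows essentially the same route as the paper: same choice of set $\mathcal{U}$, same operator $F$, same appeal to Schauder--Tychonoff, and the same reproducing formula \eqref{2.5} to pass from the fixed point of (R1) to the growing extreme solution $x(t)$. The paper only sketches this proof (deferring to the method of Theorem 4.2), and the details you supply — the self-map estimate from \eqref{4.15}, the derivative bound $|(Fu)'(t)|\le q(t)+Q(t)^2/p(t)$ for equicontinuity, and the use of the equivalent form $\int_a^\infty Q(s)/p(s)\,ds=\infty$ of \eqref{4.2} to conclude $x(\infty)=\infty$ — are exactly what is needed and consistent with the paper's approach.
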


\smallskip

It will suffice to outline the proof of the above theorems. To
prove Theorem 4.5 choose $T>a$ so that \eqref{4.14} holds for
$t\geq T$, define the operator
$$
Gv(t)=-\pi(t)+\int_t^{\infty}q(s)v(s)^2ds,\quad t\geq T,
$$
and let $G$ act on the set
$$
\mathcal{V}=\{v\in C[T,\infty): -\pi(t)\leq v(t)\leq
-(1-\gamma)\pi(t),\;t\geq T\}.
$$
It can be shown routinely that $G$ is a self-map of $\mathcal{V}$,
that $G$ is a continuous map and that $G(\mathcal{V})$ is
relatively compact in $C[T,\infty)$. Therefore, by the
Schauder-Tychonoff theorem $G$ has a fixed point $v\in
\mathcal{V}$ which gives a solution of the Riccati equation (R2)
on $[T,\infty)$. With this $v(t)$ form a function
$$
x(t)=-v(t)\exp\Bigl(\int_T^t q(s)v(s)ds\Bigr),\quad t\geq T.
$$
Then, it is easily checked that $x(t)$ is a positive extreme
solution of (A) satisfying $x(\infty)=0$ and $Dx(\infty)=0$.

To prove Theorem 4.6 choose $T>a$ so that \eqref{4.15} holds for
$t\geq T$ and consider the set
$$
\mathcal{U}=\{u\in C[T,\infty): (1-\delta)Q(t)\leq u(t)\leq
Q(t),\;\;t\geq T\}.
$$
Then, letting the integral operator
$$
Fu(t)=Q(t)-\int_T^t \frac{u(s)^2}{p(s)}ds,\quad t\geq T,
$$
act on $\mathcal{U}$, one can verify that $F$ is a continuous
self-map of $\mathcal{U}$ such that $F(\mathcal{U})$ is a
relatively compact subset of $C[T,\infty)$. Therefore, $F$ has a
fixed point $u\in \mathcal{U}$ by the Schauder-Tychonoff theorem.
Clearly, $u(t)$ is a solution of the Riccati equation (R1) on
$[T,\infty)$. With this $u(t)$ form a positive function by
$$
x(t)=\exp\Bigl(\int_T^t \frac{u(s)}{p(s)}ds\Bigr),\quad t\geq T.
$$
Then, one easily sees that it is an extreme solution of (A)
satisfying $x(\infty)=Dx(\infty)=\infty$.

\medskip

\begin{corollary}\label{corollary4.2}\; {\it Assume that $I_p<\infty\wedge
I_q=\infty$. If conditions} \eqref{4.2}, \eqref{4.14} {\it and}
\eqref{4.15} {\it are satisfied, then equation} \rm (A) {\it has a
positive extreme solution basis $\{x_1(t),x_2(t)\}$ such that
$x_1(\infty)=Dx_1(\infty)=0$ and
$x_2(\infty)=Dx_2(\infty)=\infty$.}
\end{corollary}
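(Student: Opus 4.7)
The corollary is the natural amalgamation of Theorems~\ref{theorem4.4} and \ref{theorem4.5}, so the plan is to apply each of those theorems to manufacture one member of the prospective basis and then certify linear independence of the resulting pair.

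First I would invoke Theorem~\ref{theorem4.4}. Its hypotheses ($I_p<\infty\wedge I_q=\infty$, \eqref{4.2}, and \eqref{4.14}) are all available, so it delivers a positive decaying extreme solution $x_1(t)$ of (A) on some interval $[T_1,\infty)$ with $x_1(\infty)=Dx_1(\infty)=0$; this $x_1$ is reproduced through the Schauder--Tychonoff construction from a negative fixed point of the operator $Gv(t)=-\pi(t)+\int_t^\infty q(s)v(s)^2ds$ on $\{-\pi(t)\le v(t)\le -(1-\gamma)\pi(t)\}$. Next I would invoke Theorem~\ref{theorem4.5} under the remaining assumption \eqref{4.15}: this produces a positive growing extreme solution $x_2(t)$ of (A) on some interval $[T_2,\infty)$ with $x_2(\infty)=Dx_2(\infty)=\infty$, obtained via the fixed point of $Fu(t)=Q(t)-\int_T^t u(s)^2/p(s)ds$ on $\{(1-\delta)Q(t)\le u(t)\le Q(t)\}$ together with the reproducing formula \eqref{2.5}. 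Setting $T=\max(T_1,T_2)$, both $x_1$ and $x_2$ live on the common interval $[T,\infty)$.

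The only remaining point is that $\{x_1,x_2\}$ is a basis for $\mathcal{S}(\textrm{A})$, i.e., that these two solutions are linearly independent. Suppose $c_1 x_1(t)+c_2 x_2(t)\equiv 0$ on $[T,\infty)$. Dividing by $x_2(t)>0$ and letting $t\to\infty$, the ratio $x_1(t)/x_2(t)$ tends to $0\cdot 0=0$ because $x_1(\infty)=0$ and $x_2(\infty)=\infty$; this forces $c_2=0$, and then $c_1 x_1(t)=0$ together with $x_1(t)>0$ forces $c_1=0$. Since $\dim \mathcal{S}(\textrm{A})=2$, the pair $\{x_1,x_2\}$ is a basis, and it consists entirely of extreme solutions of the two prescribed terminal-state types.

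There is no genuine obstacle here; the entire analytic content is carried by Theorems~\ref{theorem4.4} and \ref{theorem4.5}, and the corollary merely records that imposing \emph{both} auxiliary conditions \eqref{4.14} and \eqref{4.15} on top of \eqref{4.2} is enough to realize a complete extreme basis in the regime $I_p<\infty\wedge I_q=\infty$. This is consistent with Theorem~\ref{theorem4.1}(iv), which a priori guarantees that in this regime all solutions of (A) are extreme as soon as \eqref{4.2} holds.
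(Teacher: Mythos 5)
Your proof is correct and takes essentially the same route the paper intends: the corollary is stated without an explicit proof precisely because it is the direct conjunction of Theorems~\ref{theorem4.4} and \ref{theorem4.5}, and you supply the only non-automatic detail (restricting to a common interval and verifying linear independence from $x_1(t)/x_2(t)\to 0$).
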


\smallskip

The case of equation (A) with $I_p=\infty\wedge I_q=\infty$
remains to be examined. In this case all members $x(t)$ of
$\mathcal{S}$\textrm{(A)} are extreme solutions. A simple example
of such equations is
\begin{equation}\label{4.16}
(p(t)x')'=\frac{k^2}{p(t)}x,
\end{equation}
where $k>0$ is a constant. This is a special case of (A) with
$q(t)=k^2/p(t)$. Note that $I_q=k^2I_p$. The first Riccati
equation associated with \eqref{4.16} is
\begin{equation}\label{4.17}
u'=\frac{k^2-u^2}{p(t)}.
\end{equation}
Since \eqref{4.17} has the exact global solutions
\begin{equation}\label{4.18}
u_1(t)\equiv k,\quad u_2(t)\equiv -k,
\end{equation}
using the formula \eqref{2.5} we see that if $I_p=\infty$,
equation \eqref{4.16} has linearly independent extreme solutions
\begin{equation}\label{4.19}
x_1(t)=\exp(kP(t)),\qquad x_2(t)=\exp(-kP(t)), \quad t\geq a.
\end{equation}
One can use the second Riccati equation (R2) $v'=(1-k^2v^2)/p(t)$
to reach the same conclusion.

\smallskip

It is desirable to find a nontrivial class of equations of the
form (A) with $(p,q)$ satisfying $I_p=\infty\wedge I_q=\infty$
whose extreme solutions, either growing or decaying or both, can
be reproduced by way of the Riccati equations. However, we are
still far from solving this problem, and so we choose to close
this section by presenting an artificial method of making special
equations of the form (A) possessing exact extreme solutions that
are reproduced from exact global solutions of the associated
Riccati equation (R1) or (R2).

\smallskip

\begin{theorem}\label{theorem4.6}\;\; {\rm (i)}\;\; {\it Let $I_p=\infty$. If
$\varphi(t)$ is a positive $C^1$-function on $[a,\infty)$
satisfying $\varphi'(t)>0$ and $\varphi(\infty)=\infty$, then the
differential equation}
\begin{equation}\label{4.20}
(p(t)x')'=\Bigl(\frac{\varphi(t)^2}{p(t)}+\varphi'(t)\Bigr)x
\end{equation}
{\it has an extreme solution $x(t)$ such that
$x(\infty)=Dx(\infty)=\infty$.}

{\rm (ii)}\;\;{\it Let $I_p=\infty$. If $\Phi(t)$ is a positive
$C^1$-function on $[a,\infty)$ satisfying $\Phi'(t)<0$,
$\Phi(\infty)=0$ and}
\begin{equation}\label{4.21}
\int_a^{\infty}\frac{\Phi(t)^2}{p(t)}dt =\infty,
\end{equation}
{\it then the differential equation}
\begin{equation}\label{4.22}
(p(t)x')'=\Bigl(\frac{\Phi(t)^2}{p(t)}-\Phi'(t)\Bigr)x
\end{equation}
{\it has an extreme solution $x(t)$ such that
$x(\infty)=Dx(\infty)=0$.}
\end{theorem}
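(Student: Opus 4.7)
The plan is to verify that the ``guessed'' functions $u(t)=\varphi(t)$ for part (i) and $u(t)=-\Phi(t)$ for part (ii) are exact global solutions of the first Riccati equation (R1), and then to plug these into the reproducing formula \eqref{2.5} to construct the claimed extreme solutions. The whole construction of \eqref{4.20} and \eqref{4.22} is tailored precisely so that (R1) admits these visible solutions.

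For part (i), I would start by substituting $u(t)=\varphi(t)$ into $u'=q(t)-u^2/p(t)$ with $q(t)=\varphi(t)^2/p(t)+\varphi'(t)$; the two $\varphi^2/p$ terms cancel and leave $\varphi'=\varphi'$, confirming that $\varphi$ is a positive global solution of (R1) on $[a,\infty)$. Since $\varphi(t)\ge \varphi(a)>0$ and $I_p=\infty$, the integral $\int_T^{t}\varphi(s)/p(s)\,ds$ diverges to $+\infty$, so the first branch of \eqref{2.5} produces the positive solution
\begin{equation*}
x(t)=\exp\Bigl(\int_T^{t}\frac{\varphi(s)}{p(s)}\,ds\Bigr),\qquad Dx(t)=\varphi(t)\,x(t),\quad t\ge T,
\end{equation*}
of \eqref{4.20}. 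Then $x(\infty)=\infty$ is immediate, and together with $\varphi(\infty)=\infty$ this gives $Dx(\infty)=\infty$, i.e. a growing extreme solution.

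For part (ii), the same direct substitution shows that $u(t)=-\Phi(t)$ solves (R1) with $q(t)=\Phi(t)^2/p(t)-\Phi'(t)$. The step that takes the tiniest bit of care is converting the hypothesis \eqref{4.21} into divergence of $\int_T^{\infty}\Phi(s)/p(s)\,ds$: since $\Phi$ is positive and decreasing with $\Phi(\infty)=0$, one has $\Phi(t)\le M:=\Phi(a)$, hence $\Phi(t)^2\le M\Phi(t)$, and \eqref{4.21} forces
\begin{equation*}
\int_a^{\infty}\frac{\Phi(s)}{p(s)}\,ds\;\ge\;\frac{1}{M}\int_a^{\infty}\frac{\Phi(s)^2}{p(s)}\,ds\;=\;\infty.
\end{equation*}
Consequently $\int_T^{t}u(s)/p(s)\,ds=-\int_T^{t}\Phi(s)/p(s)\,ds\to-\infty$, so the first branch of \eqref{2.5} yields the positive solution
\begin{equation*}
x(t)=\exp\Bigl(-\int_T^{t}\frac{\Phi(s)}{p(s)}\,ds\Bigr),\qquad Dx(t)=-\Phi(t)\,x(t),\quad t\ge T,
\end{equation*}
of \eqref{4.22}, and we have $x(\infty)=0$, while $Dx(\infty)=0$ follows from $\Phi(\infty)=0$ and $x(\infty)=0$. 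Thus $x(t)$ is the desired decaying extreme solution.

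The only genuine ``obstacle'' is the one cosmetic check just above, namely that the divergence of $\int\Phi^{2}/p$ implies the divergence of $\int\Phi/p$ needed to make the exponential collapse to $0$; the boundedness of $\Phi$, forced by monotonicity and $\Phi(\infty)=0$, handles this at once. Everything else is direct verification using the Riccati ansatz and the reproducing formula \eqref{2.5}, which is precisely the spirit of the ``artificial construction'' announced before the theorem.
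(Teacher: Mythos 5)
Your proof is correct and follows the same route as the paper's brief sketch: observe that $\varphi(t)$ (resp.\ $-\Phi(t)$) is an exact global solution of (R1) for \eqref{4.20} (resp.\ \eqref{4.22}), and feed it into the reproducing formula \eqref{2.5}. One small remark on presentation: the paper justifies hypothesis \eqref{4.21} by saying it is needed to guarantee $I_q=\infty$ (which places the equation in the regime where all solutions are extreme), whereas you use \eqref{4.21} more directly — via $\Phi^2\le\Phi(a)\Phi$ — to force $\int^\infty\Phi/p=\infty$ and hence $x(\infty)=0$; your version makes the mechanism explicit and self-contained, and the two observations are of course compatible since $\int_a^\infty q\,dt=\int_a^\infty\Phi^2/p\,dt+\Phi(a)$.
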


\vspace*{0.3cm}

\begin{theorem}\label{theorem4.7}\;\; {\rm (i)}\;\; {\it Let $I_p=\infty$. If
$\varphi(t)$ is a positive $C^1$-function on $[a,\infty)$
satisfying $\varphi'(t)<0$ and $\varphi(\infty)=0$, then the
differential equation}
\begin{equation}\label{4.23}
(p(t)x')'=\frac{1}{\varphi(t)^2}\Bigl(\frac{1}{p(t)}-\varphi'(t)\Bigr)x
\end{equation}
{\it has an extreme solution $x(t)$ such that
$x(\infty)=Dx(\infty)=\infty$.}

\rm (ii)\;\;{\it Let $I_p=\infty$. If $\Phi(t)$ is a positive
$C^1$-function on $[a,\infty)$ satisfying $\Phi'(t)>0$,
$\Phi(\infty)=\infty$ and}
\begin{equation}\label{4.24}
\int_a^{\infty}\frac{dt}{p(t)\Phi(t)^2}=\infty,
\end{equation}
{\it then the differential equation}
\begin{equation}\label{4.25}
(p(t)x')'=\frac{1}{\Phi(t)^2}\Bigl(\frac{1}{p(t)}+\Phi'(t)\Bigr)x
\end{equation}
{\it has an extreme solution $x(t)$ such that
$x(\infty)=Dx(\infty)=0$.}
\end{theorem}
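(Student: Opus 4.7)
My plan mirrors the strategy of Theorem 4.6, but interchanges the roles of the two Riccati equations: whereas Theorem 4.6 was settled by exhibiting exact global solutions of (R1) and invoking the reproducing formula \eqref{2.5}, here I would produce exact global solutions of the second Riccati equation (R2) and invoke the reproducing formula \eqref{2.8}. No fixed-point argument will be required; the proof is reduced to guessing the correct $v$ and verifying two asymptotic inequalities.

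For part (i), I would try the ansatz $v(t)=\varphi(t)$ in (R2). With $q(t)=\varphi(t)^{-2}(1/p(t)-\varphi'(t))$, a direct computation gives
\[
\frac{1}{p(t)}-q(t)\varphi(t)^{2}=\frac{1}{p(t)}-\frac{1}{p(t)}+\varphi'(t)=\varphi'(t)=v'(t),
\]
so $v=\varphi$ is a positive global solution of (R2) on $[a,\infty)$. Formula \eqref{2.8} then yields the positive solution
\[
x(t)=\exp\Bigl(\int_T^t \frac{ds}{p(s)\varphi(s)}\Bigr),\qquad Dx(t)=\frac{x(t)}{v(t)}=\frac{x(t)}{\varphi(t)}.
\]
To conclude $x(\infty)=\infty$ I would use that $\varphi(\infty)=0$ forces $\varphi(s)\leq 1$ eventually, whence $1/(p(s)\varphi(s))\geq 1/p(s)$; combined with $I_p=\infty$ the exponent diverges. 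Since $x(t)\to\infty$ while $\varphi(t)\to 0$, the quotient $x(t)/\varphi(t)$ also tends to infinity, so $x(t)$ is a growing extreme solution.

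For part (ii) I would try $v(t)=-\Phi(t)<0$. A direct substitution into (R2) with $q(t)=\Phi(t)^{-2}(1/p(t)+\Phi'(t))$ again yields $v'(t)=-\Phi'(t)=1/p(t)-q(t)v(t)^{2}$, so $v$ is a negative global solution. Choosing the appropriate branch of \eqref{2.8} I would set
\[
x(t)=\exp\Bigl(-\int_T^t \frac{ds}{p(s)\Phi(s)}\Bigr),\qquad Dx(t)=\frac{x(t)}{v(t)}=-\frac{x(t)}{\Phi(t)}.
\]
For $x(\infty)=0$ it remains to show $\int_T^{\infty}ds/(p(s)\Phi(s))=\infty$; this is where hypothesis \eqref{4.24} enters. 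Because $\Phi(\infty)=\infty$, eventually $\Phi(s)\geq 1$, hence $1/(p(s)\Phi(s))\geq 1/(p(s)\Phi(s)^{2})$, and \eqref{4.24} supplies the required divergence. The identity $Dx(t)=-x(t)/\Phi(t)$ together with $x\to 0$ and $\Phi\to\infty$ then forces $Dx(\infty)=0$.

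The only delicate step, and therefore the main (minor) obstacle, is this asymptotic comparison in part (ii): the extra hypothesis \eqref{4.24} plays the precise role of guaranteeing that $x(t)$ actually decays rather than stabilising at a positive limit. Part (i) needs no analogous hypothesis because $I_p=\infty$ and $\varphi(\infty)=0$ combine automatically to force divergence of the exponent, whereas in part (ii) the assumption $I_p=\infty$ is insufficient on its own since $\Phi$ grows unboundedly, and \eqref{4.24} is exactly the supplementary condition that compensates.
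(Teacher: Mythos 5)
Your proof is correct and matches the paper's intended approach: the paper states that Theorem~\ref{theorem4.7} is to be proved analogously to Theorem~\ref{theorem4.6}, namely by observing that $\varphi(t)$ (respectively $-\Phi(t)$) is an exact global solution of the second Riccati equation (R2) for the constructed equation and invoking the reproducing formula \eqref{2.8}. The only cosmetic difference is in reading hypothesis \eqref{4.24}: the paper describes the analogous condition as guaranteeing $I_q=\infty$, while you use it directly to force $\int^{\infty}dt/(p(t)\Phi(t))=\infty$; these are equivalent here because $\int^{\infty}\Phi'(t)\Phi(t)^{-2}\,dt$ always converges.
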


\smallskip

To prove Theorem 4.8 it suffices to notice that $\varphi(t)$ (or
$-\Phi(t)$) is a solution of (R1) for \eqref{4.20} (or (R1) for
\eqref{4.22}) and to apply the formula \eqref{2.5} to obtain a
growing extreme solution $\exp(\int_a^t (\varphi(s)/p(s))ds)$ of
\eqref{4.20} and a decaying extreme solution $\exp(-\int_a^t
(\Phi(s)/p(s))ds)$ of \eqref{4.22}. In the statement (ii)
condition \eqref{4.21} is needed to assure that $q(t)$ satisfies
$I_q=\infty$. Theorem 4.9 can be proved analogously.

\medskip

\section{Examples}

\indent

We present some examples illustrating all the results obtained in
Sections 3 and 4. In the first example, $p$ is assumed to satisfy
$I_p=\infty$ and $P$ is given by $P(t)=\int_a^t ds/p(s)$.

\begin{example}\label{example5.1}\; Let $p(t)$ be a positive continuous
function on $[a,\infty)$ such that $I_p=\infty$ and consider the
linear differential equation
\begin{equation}\label{5.1}
(p(t)x')'=q(t)x,\quad q(t)=\frac{k}{p(t)P(t)^{\lambda}(\log
P(t))^{\mu}},
\end{equation}
where $k>0$, $\lambda>0$ and $\mu\in {\bf R}$ are constants.
Solutions of this equations are sought on intervals of the form
$[T,\infty)$, where $T\geq a$ should be such that $P(T)\geq e$.

Our attention is focused on the case $I_q<\infty$. This occurs if
and only if $(\lambda,\mu)$ is a member of the set
\begin{equation}\label{5.2}
\{(\lambda,\mu): \lambda>1, \mu\in {\bf R}\}\cup
\{(\lambda,\mu):\lambda=1,\mu>1\},
\end{equation}
which is abbreviated as $\{\lambda>1, \mu\in {\bf R}\}$ or
$\{\lambda=1, \mu>1\}$. In this case $\rho(t)$ has the asymptotic
property
$$
\rho(t)\sim \frac{k}{(\lambda-1)P(t)^{\lambda-1}(\log
P(t))^{\mu}},\quad t\to\infty, \quad \textrm{if}\quad \{\lambda>1,
\mu\in {\bf R}\},
$$
\begin{equation}\label{5.3}
\end{equation}
$$
\rho(t)\sim \frac{k}{(\mu-1)(\log P(t))^{\mu-1}},\quad
t\to\infty,\quad \textrm{if} \quad \{\lambda=1, \mu>1\}.
$$
Condition \eqref{3.5} holds for \eqref{5.1} if $(\lambda,\mu)$
satisfies
\begin{equation}\label{5.4}
\{\lambda>2, \mu\in {\bf R}\}\quad \textrm{or}\quad \{\lambda=2,
\mu>1\},
\end{equation}
in which case Theorem 3.2 ensures the existence of a moderate
basis $\{x_1(t), x_2(t)\}$ for \eqref{5.1} on some interval
$[T,\infty)$ which exhibit the asymptotic behavior
\begin{equation}\label{5.5}
x_1(t)\sim 1,\quad Dx_1(t)\sim \rho(t),\qquad x_2(t)\sim
P(t),\quad Dx_2(t)\sim 1, \quad t\to\infty.
\end{equation}
These solutions are reproduced by way of the Riccati equations
(R1) and (R2) associated with \eqref{5.1} as follows:
\begin{equation}\label{5.4}
x_1(t)=\exp\Bigl(-\int_t^{\infty}\frac{u(s)}{p(s)}ds\Bigr),\qquad
x_2(t)=v(t)\exp\Bigl(\int_T^t q(s)v(s)ds\Bigr),
\end{equation}
where $u(t)$ and $v(t)$ are, respectively, solutions of (R1) and
(R2) on some interval $[T,\infty)$ which satisfy $-\rho(t)\leq
u(t)\leq -\rho(t)/2$ and $P(t)/2\leq v(t)\leq P(t)\}$ there.

Turning to extreme solutions of \eqref{5.1}, we first note that
condition \eqref{4.1} holds for \eqref{5.1} if $(\lambda, \mu)$
satisfies
\begin{equation}\label{5.7}
\{\lambda=1, \mu>1\}\quad \textrm{or}\quad \{1<\lambda<2, \mu \in
{\bf R}\}\quad \textrm{or}\quad \{\lambda=2, \mu\leq 1\}.
\end{equation}
We then compute to see that
$$
\int_T^t q(s)P(s)^2ds \sim
\frac{kP(t)^{3-\lambda}}{(3-\lambda)(\log P(t))^{\mu}}, \quad
t\to\infty,
$$
from which it follows that
\begin{equation}\label{5.8}
\lim_{t\to\infty}\frac{1}{P(t)}\int_T^t q(s)P(s)^2ds=\begin{cases}
0& \text{for\;all\;$k>0$\; if $0<\mu\leq 1$}, \\
k& \text{for\;all\;$k<1$\; if $\mu=0$}.
\end{cases}
\end{equation}
Consequently, by Theorem 4.2 equation \eqref{5.1} possesses a
positive extreme solution $x(t)$ such that
$x(\infty)=Dx(\infty)=\infty$ for all $k$ if
$\{\lambda=2,\;0<\mu\leq 1\}$ and for all $k<1$ if
$\{\lambda=2,\;\mu=0\}$. In either case the solution $x(t)$ can be
represented in the form
\begin{equation}\label{5.9}
x(t)=v(t)\exp\Bigl(\int_T^t q(s)v(s)ds\Bigr),\quad t\geq T,
\end{equation}
in terms of a global solution $v(t)$ of (R2) for \eqref{5.1}
satisfying $(1-l)P(t)\leq v(t)\leq P(t)$ for $t\geq T$, where
$l>0$ is any constant such that $l<1$ if $0<\mu\leq 1$ and such
that $l<k$ if $\mu=0$.

To examine the applicability of Theorem 4.3 it should be noted
that the set of $(\lambda,\mu)$ in \eqref{5.7} for which
$\rho(t)^2/p(t)$ is integrable on $[a,\infty)$ is
\begin{equation}\label{5.10}
\bigl\{\lambda=2,\mu\leq 1\bigr\}\quad \textrm{or}\quad
\bigl\{2>\lambda>\frac{3}{2},\mu>\frac{1}{2}\bigr\}\quad
\textrm{or}\quad \bigl\{\lambda=\frac{3}{2},\mu\in {\bf R}\bigr\}.
\end{equation}
Then we compute:
$$
\bigl\{\lambda=\frac{3}{2},\mu\in {\bf R}\bigr\}\quad
\Longrightarrow \quad
\frac{1}{\rho(t)}\int_t^{\infty}\frac{\rho(s)^2}{p(s)}ds\sim
\frac{2k}{2\mu-1}\frac{P(t)^{\lambda-1}}{(\log P(t))^{\mu}}\to
\infty,\quad t\to\infty,
$$
$$
\bigl\{2>\lambda>\frac{3}{2},\mu>\frac{1}{2}\bigr\}\quad
\Longrightarrow \quad
\frac{1}{\rho(t)}\int_t^{\infty}\frac{\rho(s)^2}{p(s)}ds\sim
\frac{k}{(\lambda-1)(2\lambda-3)}\frac{P(t)^{2-\lambda}}{(\log
P(t))^{\mu}},\quad t\to\infty,
$$
and
$$
\bigl\{\lambda=2,\mu\leq 1\bigr\}\quad \Longrightarrow \quad
\frac{1}{\rho(t)}\int_t^{\infty}\frac{\rho(s)^2}{p(s)}ds\sim
\frac{k}{(\log P(t))^{\mu}}, \quad t\to\infty,
$$
which implies
\begin{equation}\label{5.11}
\lim_{t\to\infty}\frac{1}{\rho(t)}\int_t^{\infty}\frac{\rho(s)^2}{p(s)}ds
=\begin{cases}
0 & \text{for\;all\;$k$\;if\;$0<\mu \leq 1$},\\
k & \text{for\;all\;$k<1$\;if\;$\mu=0$}
\end{cases}
\end{equation}
This shows that Theorem 4.3 is applicable to \eqref{5.1} only in
the case where $\lambda=2$ and $0\leq \mu \leq 1$ and guarantees
the existence of a positive extreme solution $x(t)$ of equation
\eqref{5.1} such that $x(\infty)=Dx(\infty)=0$ for all $k$ if
$0<\mu\leq 1$ and for all $k<1$ if $\mu=0$. In either case the
solution $x(t)$ is represented in the form
\begin{equation}\label{5.12}
x(t)=\exp\Bigl(\int_T^t \frac{u(s)}{p(s)}ds\Bigr), \quad t\geq T,
\end{equation}
by using a global solution $u(t)$ of the Riccati equation (R1) for
\eqref{5.1} satisfying $-P(t)\leq u()\leq -(1-l)P(t)$ for $t\geq
T$, where $l>0$ is any constant such that $l<1$ if $\mu\leq 1$ and
such that $l<k$ if $\mu=0$.

Thus it is concluded that equation \eqref{5.1} possesses both
growing and decaying extreme solutions for all $k$ if $\lambda=2$
and $0<\mu\leq 1$ and for all $k<1$ if $\lambda=2$ and $\mu=0$.

\end{example}

\begin{remark}\label{remark5.1}\; The particular case
$\{\lambda=2,\;\mu=0\}$ of \eqref{5.1}, i.e.,
\begin{equation}\label{5.13}
(p(t)x')'=\frac{kx}{p(t)P(t)^2},\quad k>0,
\end{equation}
has a pair of exact extreme solutions
$\{P(t)^{\alpha_1},\;P(t)^{\alpha_2}\}$, where
\begin{equation}\label{5.14}
\alpha_1=\frac{1}{2}(1+\sqrt{1+4k}),\quad
\alpha_2=\frac{1}{2}(1-\sqrt{1+4k}).
\end{equation}
\end{remark}

In the second example, $p$ is assumed to satisfy $I_p<\infty$ and
$\pi$ is given by $\pi(t)=\int_t^{\infty}ds/p(s)$.

\smallskip

\begin{example}\label{example5.2}\; Let $p(t)$ be a positive continuous
function on $[a,\infty)$ such that $I_p<\infty$. Let $k>0$,
$\lambda$ and $\mu$ are constants and consider the linear
differential equation
\begin{equation}\label{5.15}
(p(t)x')'=q(t), \quad
q(t)=\frac{k}{p(t)}\Bigl(\frac{1}{\pi(t)}\Bigr)^{\lambda}
\Bigl(\log\Bigl(\frac{1}{\pi(t)}\Bigr)\Bigr)^{\mu},
\end{equation}
on $[T,\infty)$, where $T\geq a$ is chosen so that $\pi(T)\leq e$.

Note that $I_q=\infty$ if
\begin{equation}\label{5.16}
\{\lambda>1,\;\mu\in {\bf R}\}\quad \textrm{or}\quad
\{\lambda=1,\;\mu\geq-1\},
\end{equation}
and that the function $Q(t)$ has the asymptotic properties
$$
Q(t)\sim
\frac{k}{\lambda-1}\Bigl(\frac{1}{\pi(t)}\Bigr)^{\lambda-1}\Bigl(
\log\Bigl(\frac{1}{\pi(t)}\Bigr)\Bigr)^{\mu},\quad
t\to\infty,\quad \textrm{if} \quad \{\lambda>1,\;\;\mu\in {\bf
R}\},
$$
\begin{equation}\label{5.17}
\end{equation}
$$
Q(t)\sim
\frac{k}{\mu+1}\Bigl(\log\Bigl(\frac{1}{\pi(t)}\Bigr)\Bigr)^{\mu+1},\quad
t\to\infty,\quad \textrm{if}\quad \{\lambda=1,\;\;\mu>-1\},
$$
$$
Q(t)\sim k\log\log\Bigl(\frac{1}{\pi(t)}\Bigr),\quad t\to\infty,
\quad \textrm{if} \quad \{\lambda=1,\;\;\mu=-1\}.
$$

Since condition \eqref{3.24} is satisfied if
\begin{equation}\label{5.18}
\{\lambda<2,\;\;\mu\in {\bf R}\}\quad \textrm{or}\quad
\{\lambda=2,\;\;\mu\geq -1\},
\end{equation}
by Theorem 3.3 there exists a moderate basis $\{x_1(t),x_2(t)\}$
for equation \eqref{5.15} having the asymptotic behavior
\begin{equation}\label{5.19}
x_1(t)\sim \pi(t),\quad Dx_1(t)\sim -1,\qquad x_2(t)\sim
1,\;\;Dx_2(t)\sim Q(t), \quad t\to\infty.
\end{equation}
These solutions are reproduced by way of the Riccati equations
(R2) and (R1) for \eqref{5.13} as follows:
\begin{equation}\label{5.20}
x_1(t)=-v(t)\exp\Bigl(-\int_t^{\infty}q(s)v(s)ds\Bigr),\qquad
x_2(t)=\exp\Bigl(-\int_t^{\infty}\frac{u(s)}{p(s)}ds\Bigr),
\end{equation}
where $v(t)$ and $u(t)$ are, respectively, solutions of (R2) and
(R1) on some interval $[T,\infty)$ which satisfy $-\pi(t)\leq
v(t)\leq -\pi(t)/2$ and $Q(t)/2\leq u(t)\leq Q(t)\}$ there.

Turning to extreme solutions of \eqref{5.15}, we must notice that
condition \eqref{4.2} holds for \eqref{5.15} if $(\lambda, \mu)$
satisfies
\begin{equation}\label{5.21}
\{\lambda>2,\;\mu\in {\bf R}\} \quad \textrm{or}\quad
\{\lambda=2,\;\mu\geq-1\}.
\end{equation}
Then we find that
$$
\int_t^{\infty}q(s)\pi(s)^2ds\sim \begin{cases}
\displaystyle{\frac{2}{3-\lambda}\Bigl(\frac{1}{\pi(t)}\Bigr)^{\lambda-3}
\Bigl(\log\Bigl(\frac{1}{\pi(t)}\Bigr)\Bigr)^{\mu}}&
\text{if\;$\lambda<3$,\;$\mu\in {\bf R}$}, \\
\displaystyle{\frac{k}{-(\mu+1)}\Bigl(\log\Bigl(\frac{1}{\pi(t)}\Bigr)\Bigr)^{\mu+1}}
& \text{if\;$\lambda=3,\;\mu<-1$},
$$
\end{cases}
$$
as $t\to\infty$, from which it follows that
\begin{equation}\label{5.22}
\lim_{t\to\infty}\frac{1}{\pi(t)}\int_t^{\infty}q(s)\pi(s)^{2}ds=\begin{cases}
0 & \text{if\;$\{\lambda>2,\;\mu\in {\bf
R}\}$\;or\;$\{\lambda=2,\;-1\leq\mu<0\}$},
\\
k & \text{if\;$\{\lambda=2,\;\mu=0\}$}.
\end{cases}
\end{equation}
Therefore, it is concluded from Theorem 4.5 that equation
\eqref{5.15} possesses a decaying extreme solution for all $k>0$
if $\{\lambda>2\;\mu\in {\bf R}\}$ or $\{\lambda,\;-1\leq\mu<0\}$
and for all $k<1$ if $\{\lambda=2,\;\leq\mu=0\}$. In either case
the solution $x(t)$ is expressed in the form
\begin{equation}\label{5.23}
x(t)=-v(t)\exp\Bigl(\int_T^t q(s)v(s)ds\Bigr),\quad t\geq T,
\end{equation}
where $v(t)$ is a solution of (R2) for \eqref{5.15} satisfying
$-\pi(t)\leq v(t)\leq -\pi(t)/2$ for $t\geq T$, where $l>0$ is a
constant such that $l<1$ if $\{\lambda<2,\;\mu\in {\bf R}\}$ or
$\{\lambda=2,\;\mu<0\}$ and such that $l<k$ if
$\{\lambda=2,\;\mu=0\}$.

It should be noted that an extreme basis exists for equation
\eqref{5.15} for all $k$ if $\{\lambda,\;\mu<0\}$ and for all
$k<1$ if $\{\lambda=2,\;\mu=0\}$.
\end{example}

\begin{remark}\label{remark5.2}\; The simplest case $\{\lambda=2,\;\mu=0\}$
of \eqref{5.15}, i.e.,
\begin{equation}\label{5.24}
(p(t)x')'=\frac{kx}{p(t)P(t)^2},
\end{equation}
has two exact extreme solutions $P(t)^{\alpha_1}$ and
$P(t)^{\alpha_2}$, where $\alpha_1$ and $\alpha_2$ are given by
\eqref{5.14}.
\end{remark}

\medskip

Our next step is to reproduce growing extreme solutions of
\eqref{5.15} on the basis of Theorem 4.6. Necessary is the precise
information about the asymptotic behavior of $(1/Q(t))\int_T^t
Q(s)^2/p(s)ds$ as $t\to\infty$. It can be verified that if
$\{\lambda>3/2,\;\mu\in {\bf R}\}$, then
\begin{equation}\label{5.25}
\frac{1}{Q(t)}\int_T^t \frac{Q(s)^2}{p(s)}ds\sim
\frac{k}{(\lambda-1)(2\lambda-3)}\Bigl(\frac{1}{\pi(t)}\Bigr)^{\lambda-2}
\Bigl(\log\Bigl(\frac{1}{\pi(s)}\Bigr)\Bigr)^{2\mu},
\end{equation}
as $t\to\infty$, and that if $\{\lambda=3/2,\;\mu\in {\bf R}\}$,
then
\begin{equation}\label{5.26}
\frac{1}{Q(t)}\int_T^t \frac{Q(s)^2}{p(s)}ds\sim
\begin{cases}
\displaystyle{\frac{2k}{2\mu+1}\Bigl(\frac{1}{\pi(t)}\Bigr)^{-\frac{1}{2}}
\Bigl(\log\Bigl(\frac{1}{\pi(t)}\Bigr)\Bigr)^{\mu+1}}&
\text{if\;$\mu\neq -\frac{1}{2}$,} \\
\displaystyle{2k\Bigl(\frac{1}{\pi(t)}\Bigr)^{-\frac{1}{2}}
\Bigl(\log\Bigl(\frac{1}{\pi(t)}\Bigr)\Bigr)^{\frac{1}{2}}
\log\log\Bigl(\frac{1}{\pi(t)}\Bigr)} &
\text{if\;$\mu=-\frac{1}{2}$},
\end{cases}
\end{equation}
as $t\to\infty$. Since \eqref{5.25} and \eqref{5.26} implies that
\begin{equation}\label{5.27}
\lim_{t\to\infty}\frac{1}{Q(t)}\int_T^t
\frac{Q(s)^2}{p(s)}ds=\begin{cases}
\displaystyle{k} & \text{if\;$\{\lambda=2,\;\mu=0$\},} \\
\displaystyle{0} &
\text{if\;$\{\lambda=2,\;\mu<0\}$\;\;\textrm{or}\;
$\{\frac{3}{2}\leq \lambda<2,\;\mu\in {\bf R}\}$},
\end{cases}
\end{equation}
Theorem 4.6 shows that equation \eqref{5.15} possesses a growing
extreme solution for all $k<1$ if $\{\lambda=2\;\mu=0\}$ or
$\{3/2\leq \lambda<2,\;\mu\in {\bf R}\}$ and for all $k$ if
$\{\lambda=2,\;\mu=0\}$. In either case the solution $x(t)$ is
expressed in the form
\begin{equation}\label{5.28}
x(t)=\exp\Bigl(\int_T^t \frac{u(s)}{p(s)}ds\Bigr)ds,\quad t\geq T,
\end{equation}
in terms of a solution $u(t)$ of the Riccati equation (R1) for
\eqref{5.15} satisfying $(1-\delta)Q(t)\leq u(t)\leq Q(t)$ on
$[T,\infty)$ for some $\delta<k$ if $\{\lambda=2,\;\mu=0\}$ or for
some $\delta<1$ if $\{\lambda=2,\;\mu<0\}$ or $\{3/2\leq
\lambda<2,\;\mu\in {\bf R}\}$.

From the aforementioned we conclude that there exists an extreme
basis for equation \eqref{5.15} for all $k$ if
$\{\lambda=2\;\mu<0\}$ and for all $k<1$ if
$\{\lambda=2,\;\mu=0\}$.

\smallskip

\begin{remark}\label{remark5.3}\; The simplest case $\{\lambda=2,\;\mu=0\}$
of \eqref{5.15}, i.e.,
\begin{equation}\label{5.29}
(p(t)x')'=\frac{kx}{p(t)\pi(t)^2},
\end{equation}
has two exact extreme solutions $\pi(t)^{-\alpha_1}$ and
$\pi(t)^{-\alpha_2}$, where $\alpha_1$ and $\alpha_2$ are defined
by \eqref{5.14}.
\end{remark}
\medskip

\begin{example}\label{5.3}\; As an example illustrating Theorem
3.4
concerning the type-III moderate solutions of (A) we consider
equation \eqref{5.5} in which $q$ satisfies $I_q<\infty$. It is
clear that this holds if and only if $\{\lambda<1,\;\mu\in {\bf
R}\}$ or $\{\lambda=,\;\mu<-1\}$, in which case $\rho$ is given
asymptotically by
\begin{equation}\label{5.30}
\rho(t)\sim
\begin{cases}
\displaystyle{\frac{k}{1-\lambda}\Bigl(\frac{1}{\pi(t)}\Bigr)^{\lambda-1}
\Bigl(\log\Bigl(\frac{1}{\pi(t)}\Bigr)\Bigr)^{\mu}}&
\text{if\;$\{\lambda<1\;\mu\in {\bf R}\}$}, \\
\displaystyle{\frac{k}{-(\mu+1)}
\Bigl(\log\Bigl(\frac{1}{\pi(t)}\Bigr)\Bigr)^{\mu+1}}&
\text{if\;$\{\lambda=1,\;\mu<-1\}$}.
\end{cases}
\end{equation}
According to Theorem 3.4, there exist under these circumstances
three types of bounded moderate solutions $x_i(t)$, $i=1,2,3$, on
some interval $[T,\infty)$ such that
$$
x_1(t)\sim 1,\quad Dx_1(t)\sim 1,\qquad x_2(t)\sim \pi(t),\quad
Dx_2(t)\sim 1, \qquad x_3(t)\sim 1,\quad Dx_3(t)\sim -\rho(t),
$$
as $t\to\infty$. All of them are reproduced from suitable global
solutions of (R1) or (R2) associated with (5.15).
\end{example}

The final example is given to illustrate Theorem 4.9.

\begin{example}\label{example5.4}\;\;(i)\;\;The equation
\begin{equation}\label{5.31}
(e^{-t}x')'=(e^t + e^{3t})x
\end{equation}
is a special case of (4.20) with $p(t)=e^{-t}$ and
$\varphi(t)=e^{-t}$, and so from (i) of Theorem 4.9 we conclude
that \eqref{5.31} has a growing extreme solution
\begin{equation}\label{5.32}
x_1(t)=\exp\Bigl(\frac{1}{2}e^{2t}\Bigr), \quad t\geq 0.
\end{equation}
Using the formula \eqref{1.4} we then have a decaying extreme
solution of \eqref{5.31} given explicitly by
\begin{equation}\label{5.33}
x_2(t)=\exp\Bigl(\frac{1}{2}e^{2t}\Bigr)
\int_t^{\infty}\exp\bigl(s-e^{2s}\bigr)ds,\quad t\geq 0.
\end{equation}

(ii)\;\; Since the equation
\begin{equation}\label{5.34}
(e^{-3t}x')'=(e^{-t}+e^t)x
\end{equation}
is a special case of \eqref{4.22} with $p(t)=e^{-3t}$ and
$\Phi(t)=e^t$, by (ii) of Theorem 4.8 this equation has a decaying
extreme solution
\begin{equation}\label{5.35}
x_1(t)=\exp\Bigl(-\frac{1}{2}e^{2t}\Bigr),\quad t\geq 0.
\end{equation}
Using the formula \eqref{1.3} we obtain a growing extreme solution
of \eqref{5.34} expressed as
\begin{equation}\label{5.36}
x_2(t)=\exp\Bigl(-\frac{1}{2}e^{2t}\Bigr) \int_0^t
\exp\bigl(3s+e^{2s}\bigr)ds,\quad t\geq 0.
\end{equation}
\end{example}

\begin{remark}\label{remark5.3}\;\;(i)\;\;The solutions $x_1(t)$ and
$x_2(t)$ of \eqref{5.31} defined by \eqref{5.32} and \eqref{5.33}
satisfy $p(t)(x_1(t)x_2'(t)-x_1'(t)x_2(t))\equiv 1$, which can be
rewritten as
\begin{equation}\label{5.37}
\frac{x_2(t)}{Dx_2(t)}-\frac{x_1(t)}{Dx_1(t)}=\frac{1}{Dx_1(t)Dx_2(t)},\quad
t\geq 0.
\end{equation}
Note that $v_i(t)=x_i(t)/Dx_i(t)$, $i=1,2$, are solutions of the
Riccati equation (R2) that reproduce the solutions $x_i(t)$,
$i=1,2$, of \eqref{5.31}. Since $v_1(t)=e^{-t}$ is already known,
\eqref{5.37} can also be used to determine the second solution
$v_2(t)$ of (R2).

(ii)\;\; Likewise, the solutions $x_i(t)$, $i=1,2$, of
\eqref{5.34} given by \eqref{5.35} and \eqref{5.36} must satisfy
\begin{equation}\label{5.38}
\frac{x_2(t)}{Dx_2(t)}-\frac{x_1(t)}{Dx_1(t)}=-\frac{1}{Dx_1(t)Dx_2(t)},\quad
t\geq 0.
\end{equation}

\end{remark}

\medskip

\bigskip

\end{document}